\DeclareFontFamily{OT1}{pzc}{}
\DeclareFontShape{OT1}{pzc}{m}{it}{<-> s * [1.100] pzcmi7t}{}
\DeclareMathAlphabet{\mathpzc}{OT1}{pzc}{m}{it}
    \patchcmd{\section}{\scshape}{\large\bfseries}{}{}
    \renewcommand{\@secnumfont}{\bfseries}
\numberwithin{equation}{section}
\newtheorem{theorem}{Theorem}[section]
\newtheorem*{theorem*}{Theorem}
\newtheorem{corollary}[theorem]{Corollary}
\newtheorem{lemma}[theorem]{Lemma}
\newtheorem{proposition}[theorem]{Proposition}
\theoremstyle{definition}
\newtheorem*{question*}{Question}
\newtheorem*{conjecture*}{Conjecture}
\newtheorem{definition}[theorem]{Definition}
\newtheorem{remark}[theorem]{Remark}
\newtheorem{example}[theorem]{Example}
\definecolor{mycolor1}{rgb}{1,0,0}
\definecolor{mycolor2}{rgb}{0,0.8,0}
\definecolor{mycolor3}{rgb}{0,0,0.9}
\def\Ker{\mathrm{Ker}}
\def\KK{\mathbb{K}}
\def\ZZ{\mathbb{Z}}
\def\MH{\mathrm{MH}}
\def\NN{\mathcal{N}}
\def\FF{\mathcal{F}}
\def\GG{\mathcal{G}}
\def\Hom{\mathrm{Hom}}
\def\Tor{\mathrm{Tor}}
\def\PH{\mathrm{PH}}
\def\RR{\mathcal{R}}
\def\AA{\mathcal{A}}
\def\JJ{\mathcal{J}}
\def\FF{\mathbb{F}}
\def\QQ{\mathbb{Q}}
\def\SS{\mathcal{S}}
\tikzset{
  on each segment/.style={
    decorate,
    decoration={
      show path construction,
      moveto code={},
      lineto code={
        \path [#1]
        (\tikzinputsegmentfirst) -- (\tikzinputsegmentlast);
      },
      curveto code={
        \path [#1] (\tikzinputsegmentfirst)
        .. controls
        (\tikzinputsegmentsupporta) and (\tikzinputsegmentsupportb)
        ..
        (\tikzinputsegmentlast);
      },
      closepath code={
        \path [#1]
        (\tikzinputsegmentfirst) -- (\tikzinputsegmentlast);
      },
    },
  },
  mid arrow/.style={postaction={decorate,decoration={
        markings,
        mark=at position .75 with {\arrow[#1]{stealth}}
      }}},
}
\let\oldtocsection=\tocsection 
\let\oldtocsubsection=\tocsubsection 
\renewcommand{\tocsection}[2]{\hspace{0mm}\oldtocsection{#1}{#2}}
\renewcommand{\tocsubsection}[2]{\hspace{1em}\oldtocsubsection{#1}{#2}}
\title[Path homology of digraphs without multisquares]{Path homology of digraphs without multisquares and its comparison with homology of spaces}
\author{Xin Fu} 
\address{
Beijing Institute of Mathematical Sciences and Applications (BIMSA)}
\email{x.fu@bimsa.cn}
\thanks{X.\,Fu is supported by the Beijing Natural Science Foundation (grant no.\,1244043).}
\author{Sergei O. Ivanov} 
\address{
Beijing Institute of Mathematical Sciences and Applications (BIMSA)}
\email{ivanov.s.o.1986@gmail.com, ivanov.s.o.1986@bimsa.cn}
\begin{document}

\begin{abstract}
For a digraph $G$ without multisquares and a field $\FF$, we construct a basis of the vector space of path $n$-chains $\Omega_n(G;\FF)$ for $n\geq 0$, generalising the basis of $\Omega_3(G;\FF)$ constructed by Grigory'an. For a field $\FF,$ we consider the $\FF$-path Euler characteristic $\chi^\FF(G)$ of a digraph $G$ defined as the alternating sum of dimensions of path homology groups with coefficients in $\FF.$ If $\Omega_\bullet(G;\FF)$ is a  bounded chain complex, the constructed bases can be applied to compute $\chi^\FF(G)$. We provide an explicit example of a digraph $\GG$ whose $\FF$-path Euler characteristic depends on whether the characteristic of $\FF$ is two, revealing the differences between GLMY theory and the homology theory of spaces. This allows us to prove that there is no topological space $X$ whose homology is isomorphic to path homology of the digraph $H_*(X;\KK)\cong \PH_*(\GG;\KK)$ simultaneously for $\KK=\ZZ$ and $\KK=\ZZ/2\ZZ.$
\end{abstract}

\maketitle

\tableofcontents

\section{Introduction}

The path homology of digraphs, developed by Grigory’an-Lin-Muranov-Yau, and their coauthors in a series of papers \cite{grigor2012homologies,grigor2014homotopy, grigor2017homologies, grigor2020path, grigor2014graphs, grigor2018path, grigor2018path2}, has many parallels with the homology theory of spaces. 
These include an analogue of the K\"unneth formula for the box product of digraphs and  join of digraphs, a homotopy category of digraphs, and the path homology being a homotopy invariant.
Moreover, there is a notion of a fundamental group, whose abelianization coincides with the first path homology group over integers. Attempts have been made to build a consistent theory of homotopy groups \cite{li2024homotopy} and to generalize this theory to various other structures, such as quivers \cite{grigor2018path, ivanov2024simplicial}. This entire  research direction is now known as GLMY theory. As Asao shows in~\cite{asao2023magnitude}, the path homology is closely related to the magnitude homology. Based on this connection,  Hepworth and Roff developed the bigraded path homology of digraphs~\cite{hepworth2024bigraded}, which was also generalised to arbitrary quasimetric spaces in \cite{ivanov2023nested}.

As with spaces, for a digraph $G$ and a field $\FF$, one can construct a chain complex~$\Omega_\bullet(G;\FF)$ and a cochain complex~$\Omega^\bullet(G;\FF)$, whose homology and cohomology are called path homology $\PH_*(G;\FF)$ and path cohomology $\PH^*(G;\FF)$, respectively. 
Furthermore, $\Omega^\bullet(G;\FF)$ is a differential graded algebra (dg-algebra), and this structure turns the path cohomology into a graded algebra. However, unlike the case of spaces, the definitions of the complexes $\Omega_\bullet(G;\FF)$ and $\Omega^\bullet(G;\FF)$ are given in such a way that there is no explicit description of the basis of its components.

It is known that the dimension of $\Omega_0(G;\FF)$ equals the number of vertices, the dimension of $\Omega_1(G;\FF)$ equals the number of arrows, and the dimension of $\Omega_2(G;\FF)$ equals the number of triangles plus the number of squares. A digraph $G$ is said to have no multisquares if, for any pair of vertices $x,y$ with directed distance two, there are at most two shortest paths from $x$ to $y.$ Grigory’an showed \cite[Theorem~2.10]{grigor2022advances} that if a digraph $G$ does not have multisquares, a basis of $\Omega_3(G;\FF)$ consists of special digraphs called trapezohedrons and their mapping images.
However, for general cases, the dimensions of $\Omega_n(G;\FF)$ remain unclear.

In this paper, we describe a basis of~$\Omega_n(G;\FF)$ for~$G$ containing no multisquares and $n\geq 0$, which generalises the basis of $\Omega_3(G;\FF)$ by Grigory'an. We start by analysing the identification of the dg-algebra~$\Omega^\bullet(G;\FF)$ as a quotient of the path algebra of $G$, as described in~\cite[Theorem 5.4]{ivanov2024diagonal},
\[
\Omega^\bullet(G;\FF) \cong \FF G/T,
\]
where $T$ is the ideal defined by  certain relations $t_{x,y}$; see also Theorem \ref{theorem:omega:description}. 
 When~$G$ does not have multisquares, the relations defining this quotient have a simpler form. This allows us to describe a basis for $\Omega^n(G;\FF)$, yielding the dual basis for~$\Omega_n(G;\FF)$ (see Corollary~\ref{corollary:basis:omega^n} and Theorem~\ref{theorem:dual-basis}).

To proceed, we construct an undirected graph $\SS_n(G)$, whose vertices correspond to the $n$-paths in~$G$ (see Definition~\ref{def graph of short moves}). 
The dimension of $\Omega_n(G;\FF)$ 
can be calculated by counting the number of certain connected components of $\SS_n(G)$. Notably, it depends on the characteristic of $\FF$. For fields with characteristic two, the dimension of~$\Omega_n(G;\FF)$ may be greater than for fields of other characteristics. 
When the characteristic is not two, an additional bipartite property is required for the components of $\SS_n(G)$ corresponding to the elements of the basis, unlike the case when the characteristic is two.
Basis elements of~$\Omega_n(G;\FF)$ are expressed as a certain linear sum of vertices from each of these components of~$\SS_n(G)$. Refer to~Section~\ref{section basis} for a detailed description of the basis.

If the number of nontrivial path homology groups $\PH_n(G;\FF)$ of a digraph $G$ is finite, we define the $\FF$-path Euler characteristic of~$G$ as the alternating sum
\[
\chi^\FF(G) = \sum_n (-1)^n \dim_\FF(\PH_n(G;\FF)).
\]
When the complex $\Omega_\bullet(G;\FF)$ is bounded, this is equal to the alternating sum of the dimensions of $\Omega_n(G;\FF)$. 
Therefore, our description of bases for $\Omega_n(G;\FF)$ can be used to compute~$\chi^{\FF}(G)$ of~a digraph $G$ that does not contain multisquares and whose associated chain complex $\Omega_\bullet(G;\FF)$ is bounded.

Unlike spaces, the $\FF$-path Euler characteristics can vary depending on the field~$\FF$. This demonstrates the differences between GLMY theory and the homology theory of spaces. To illustrate this, we provide an example of a digraph~$\mathcal{G}$ in Subsection~\ref{sec main example}. This example highlights the fact that for certain digraphs, it is not possible to construct a space with homology isomorphic to the path homology for all coefficients. 
As a side note, we discover that over integers, the associated path cochain complex $\Omega^\bullet(\GG;\ZZ)$  contains torsion elements.

To summarise, we prove the following theorem (see Theorem \ref{theorem:example} and Proposition~\ref{prop:general:non-existance}).

\begin{theorem*}
For the digraph $\GG$ defined in~\eqref{eq main example}, the following statements hold:
\begin{enumerate}
\item The rational and mod-$2$ path Euler characteristics of $\GG$ are not equal 
\[ \chi^\QQ(\GG) \neq \chi^{\ZZ/2\ZZ}(\GG).\]

\item There is no space $X$ such that there is an isomorphism of graded abelian groups $\PH_*(\GG;\KK)\cong H_*(X;\KK)$ simultaneously for $\KK=\ZZ$ and $\KK=\ZZ/2.$

\item There is no space $X$ with homology of finite type such that there is an isomorphism of graded vector spaces $\PH_*(\GG;\FF)\cong H_*(X;\FF)$ simultaneously for $\FF=\QQ$ and $\FF=\ZZ/2\ZZ$.

\item There is no chain complex of free abelian groups $C_\bullet$ with homology of finite type such that $\PH_*(\GG;\FF) \cong H_*(C_\bullet \otimes_\ZZ \FF)$ simultaneously for  $\FF=\QQ$ and  $\FF=\ZZ/2\ZZ.$
\item There exists $n$ such that there is no ``universal coefficient'' short exact sequence 
\[ 0 \to \PH_n(\GG,\ZZ)\otimes \ZZ/2\ZZ \to    \PH_n(\GG;\ZZ/2\ZZ) \to  \Tor(\PH_{n-1}(\GG,\ZZ),\ZZ/2\ZZ) \to 0.\]

\item $\Omega^4(\GG;\ZZ)$ has $2$-torsion.
\end{enumerate} 
\end{theorem*}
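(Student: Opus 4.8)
The plan is to reduce the whole theorem to one computation: the $\FF$-dimensions of the spaces $\Omega_n(\GG;\FF)$, which — since $\Omega_\bullet(\GG;\FF)$ is bounded — compute $\chi^\FF(\GG)$ as $\sum_n(-1)^n\dim_\FF\Omega_n(\GG;\FF)$. For this I would use the combinatorial basis of Section~\ref{section basis}: $\dim_\FF\Omega_n(\GG;\FF)$ equals the number of suitable connected components of the graph $\SS_n(\GG)$ of Definition~\ref{def graph of short moves}, where over a field of characteristic $\neq 2$ one counts only those components that are moreover bipartite, while over a field of characteristic $2$ no bipartiteness is imposed. The digraph $\GG$ of~\eqref{eq main example} is built so that for exactly one value of $n$ — which, in view of part~(6), must be $n=4$ — the graph $\SS_n(\GG)$ has a relevant component carrying an odd cycle of short moves (hence non-bipartite), while in every other degree $\SS_n(\GG)$ produces the same count over $\QQ$ and over $\ZZ/2\ZZ$. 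Then $\dim_{\ZZ/2\ZZ}\Omega_4(\GG;\ZZ/2\ZZ)=\dim_\QQ\Omega_4(\GG;\QQ)+1$ with equality in all other degrees, so $\chi^{\ZZ/2\ZZ}(\GG)=\chi^\QQ(\GG)+1$, which is part~(1). I expect the real work to lie precisely here: verifying honestly that the odd cycle is present in $\SS_4(\GG)$ and, more importantly, that no compensating discrepancy hides in another degree. The shape of $\GG$ has to be reverse-engineered from this single requirement, and checking it is a finite but genuinely delicate case analysis.

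The same input yields part~(6). Since the relations $t_{x,y}$ defining $\Omega^\bullet(G;\ZZ)\cong\ZZ G/T$ (Theorem~\ref{theorem:omega:description}) have integer coefficients, base change gives $\Omega^\bullet(\GG;\FF)\cong\Omega^\bullet(\GG;\ZZ)\otimes_\ZZ\FF$; dualising, $\Omega_n(\GG;\FF)\cong\Hom_\ZZ(\Omega^n(\GG;\ZZ),\FF)$, so if $\Omega^n(\GG;\ZZ)\cong\ZZ^{r_n}\oplus(\text{torsion})$ then $\dim_\QQ\Omega_n(\GG;\QQ)=r_n$ whereas $\dim_{\ZZ/2\ZZ}\Omega_n(\GG;\ZZ/2\ZZ)=r_n+s_n$, with $s_n$ the number of $\ZZ/2^k\ZZ$ summands of $\Omega^n(\GG;\ZZ)$. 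Hence $\chi^{\ZZ/2\ZZ}(\GG)-\chi^\QQ(\GG)=\sum_n(-1)^ns_n$, and the degree-$4$ discrepancy above forces $s_4\neq 0$: $\Omega^4(\GG;\ZZ)$ has $2$-torsion. (Alternatively one names the class directly: the odd short-move cycle in $\SS_4(\GG)$ translates into a degree-$4$ element $\omega$ of $\ZZ\GG$ with $2\omega\in T$ but $\omega\notin T$.) The same discussion records that $\Omega_n(\GG;\ZZ)\cong\ZZ^{r_n}$ is free and $\Omega_\bullet(\GG;\QQ)\cong\Omega_\bullet(\GG;\ZZ)\otimes\QQ$, so $\dim_\QQ\PH_n(\GG;\QQ)=\mathrm{rank}\,\PH_n(\GG;\ZZ)$ and $\chi^\QQ(\GG)=\sum_n(-1)^n\mathrm{rank}\,\PH_n(\GG;\ZZ)$; this is what connects part~(1) to the remaining parts.

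Parts~(2)--(5) then follow formally from part~(1) using the principle that the mod-$2$ Euler characteristic of a space — or of a bounded complex of free abelian groups with homology of finite type — equals its rational Euler characteristic: by the universal coefficient theorem $\dim_{\ZZ/2\ZZ}H_n(X;\ZZ/2\ZZ)=\mathrm{rank}\,H_n(X;\ZZ)+t_n+t_{n-1}$, and the contributions $t_n$, $t_{n-1}$ telescope, giving $\sum_n(-1)^n\dim_{\ZZ/2\ZZ}H_n(X;\ZZ/2\ZZ)=\sum_n(-1)^n\mathrm{rank}\,H_n(X;\ZZ)=\sum_n(-1)^n\dim_\QQ H_n(X;\QQ)$. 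Under the hypothesis of~(2), this reads $\chi^{\ZZ/2\ZZ}(\GG)=\sum_n(-1)^n\mathrm{rank}\,\PH_n(\GG;\ZZ)=\chi^\QQ(\GG)$, contradicting~(1); (3) and (4) are the same argument applied to $X$ and to $C_\bullet$ respectively; and for~(5), if the displayed sequence existed for every $n$, summing $\FF$-dimensions with alternating signs again yields $\chi^{\ZZ/2\ZZ}(\GG)=\chi^\QQ(\GG)$, a contradiction. Finiteness of all the alternating sums follows from $\GG$ being a finite digraph with $\Omega_\bullet(\GG;\FF)$ bounded, so $\PH_*(\GG;\KK)$ is finitely generated and bounded. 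This is the content that Proposition~\ref{prop:general:non-existance} packages in general form.
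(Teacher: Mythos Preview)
Your plan is correct and mirrors the paper's argument. Two small refinements are worth noting. First, ``suitable components'' means \emph{thick} components, and the paper explicitly verifies that $\GG$ has no multisquares and no thin pairs (the six extra arrows over $\GG'$ are there precisely to kill the thin pairs), so every $\SS_n$-class is thick and Theorem~\ref{thm basis of path cochain complex} applies; you should flag this verification rather than fold it silently into the case analysis. Second, the check that ``no compensating discrepancy hides in another degree'' is not ad hoc: Proposition~\ref{prop:S^3-bipartite} and Corollary~\ref{cor:dim:n<4} show that in any digraph without multisquares every $\SS_n$-class is bipartite for $n\leq 3$, so $\dim_\QQ\Omega_n=\dim_{\ZZ/2\ZZ}\Omega_n$ automatically there, while $\Omega_n=0$ for $n\geq 5$ since $\GG$ has no $5$-paths; this pins the discrepancy to $n=4$ with no further work. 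For part~(6) the paper simply applies Theorem~\ref{thm basis of path cochain complex} over $\ZZ$ to read off $\Omega^4(\GG;\ZZ)\cong\ZZ/2\ZZ$ directly, which is your ``alternative'' route; your dimension-counting detour is also valid but unnecessary. Parts~(2)--(5) are exactly Proposition~\ref{prop:general:non-existance}, proved just as you outline.
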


{\bf Open questions.} In conclusion of the introduction, we present a list of open questions that continue this line of research.

\begin{itemize}
\item For a digraph $G$ and a field $\FF,$ is the path cohomology algebra $\PH^*(G;\FF)$ graded commutative?

\item Let $G$ be a digraph and $p$ be an odd prime such that $\Omega_\bullet(G;\QQ)$ and $\Omega_\bullet(G;\ZZ/p\ZZ)$ are bounded chain complexes. Our results imply that, if $G$ has no multisquares, we have 
\[\chi^{\QQ}(G) = \chi^{\ZZ/p\ZZ}(G).\]
Does this property also hold for  digraphs with multisquares?

\item  For a digraph $G$ without multisquares, are all nontrivial entries of the matrix representing the boundary map $\partial: \Omega_n(G;\QQ)\to \Omega_{n-1}(G;\QQ)$ with respect to the constructed bases equal to $\pm 1$?
\end{itemize}

\section{Path chains and cochains}

Throughout the article, all digraphs are finite, $\KK$ denotes a commutative ring, and  $\FF$ denotes a filed. Our terminology differs from that used in~\cite{grigor2012homologies} and other works of Grigor'yan-Lin-Muranov-Yau, but our notations are the same. For example, by a \emph{path} in $G$, we mean a sequence of vertices $p=(p_0,\dots,p_n)$ such that~$(p_i,p_{i+1})$ is an arrow for $0\leq i\leq n-1$. If we need to specify the length $n,$ we call it an $n$-path, or a path of length $n.$ Elements of~$\Omega_\bullet(G)$ are called path chains, and elements of~$\Omega^\bullet(G)$ are called path cochains. 
We also use asterisks $H_*$, $H^*$ for graded objects (without a differential) and bullet points $C_\bullet$, $C^\bullet$ for (co)chain complexes and dg-algebras.

\subsection{Definition}

We start with the definitions of $\Omega_\bullet(G)$ and $\Omega^\bullet(G)$ following Grigor'yan-Lin-Muranov-Yau~\cite{grigor2012homologies}; see also \cite[\S 5]{ivanov2024diagonal}. We will work over arbitrary commutative ring~$\KK$ rather than a field. Later, when discussing duality between $\Omega_\bullet(G)$ and $\Omega^\bullet(G),$ we will assume that $\KK=\FF$ is a field.

Assume that $\KK$ is a commutative ring, and by a ``module'' we mean a $\KK$-module. For a finite set $X$, denote by $\Lambda_\bullet(X)$ the chain complex of modules, where the $n$-th component $\Lambda_n(X)$ is a free module generated by formal elements $e_{x_0,\dots,x_n}$ indexed by tuples $(x_0,\dots,x_n)\in X^{n+1}$. In other words, we have
\begin{equation}\label{eq basis}
e_{x_0,\dots,x_n} \in \Lambda_n(X).
\end{equation}
The differential on $\Lambda_\bullet(X)$ is defined by the alternating sum 
\begin{equation}
\partial(e_{x_0,\dots,x_n})=\sum_{i=0}^n (-1)^n e_{x_0,\dots, \hat x_i, \dots x_n}. \end{equation}
Denote by $\RR_\bullet(X)$ the quotient complex of $\Lambda_\bullet(X)$ by the subcomplex whose components are spanned by tuples $e_{x_0,\dots,x_n}$ with consecutive repetitions, i.e., $x_i=x_{i+1}$ for some $0\leq i\leq n-1.$ 
A reader familiar with simplicial methods may notice that~$\Lambda_\bullet(X)$ has an obvious structure of a simplicial module, and $\RR_\bullet(X)$ is the associated normalised complex. 

For a digraph $G$, we set $\RR_\bullet(G)=\RR_\bullet(V(G)),$ where~$V(G)$ is the set of vertices. We denote by $\AA_n(G)\subseteq \RR_n(G)$ the submodule generated by all paths of length $n.$ So $\AA_n(G)$ consists of elements that can be written as linear combinations 
\begin{equation}
\sum_p \alpha_p\cdot e_p \in \AA_n(G),
\end{equation}
where $p$ runs over all $n$-paths and $\alpha_p\in \KK.$
Then $\Omega_\bullet(G)$ is defined as the largest subcomplex of $\RR_\bullet(G)$ that is contained in $\AA_*(G).$ 
The $n$-th component $\Omega_n(G)$ can be also defined by the following formula
\begin{equation}\label{eq defn Omega n}
\Omega_n(G) = \AA_n(G)\cap \partial^{-1}(\AA_{n-1}(G)).
\end{equation}
We call $\Omega_\bullet(G)$ the path chain complex and call an element in $\Omega_\bullet(G)$ a path chain.  If we need to specify the ring $\KK$, we write $\Omega_\bullet(G; \KK).$

Dually, the cochain complex~$\Lambda^\bullet(X)$ is given by
\[
\Lambda^\bullet(X)=\Hom_{\KK}(\Lambda_\bullet(X),\KK).
\]
The elements $e^{x_0,\dots,x_n}\in \Lambda^n(X)$ denote the dual basis of $e_{x_0,\dots,x_n}$ in $\Lambda_n(X)$  indexed by tuples~$(x_0,\dots,x_n)$. Then the differential on $\Lambda^\bullet(X)$ follows by the formula
\begin{equation}
\partial(e^{x_0,\dots,x_n}) = \sum_{i=0}^{n+1} \sum_{v\in X} (-1)^i e^{x_0,\dots,x_{i-1},v,x_{i},\dots,x_n}.
\end{equation}
We turn $\Lambda^\bullet(X)$ into a dg-algebra by letting
\begin{equation}
e^{x_0,\dots,x_n} \cdot e^{y_0,\dots,y_m} = \begin{cases}
 e^{x_0,\dots,x_n,y_1,\dots,y_m}, & y_0=x_n\\
 0, & \text{else}.
\end{cases}
\end{equation}

Consider the dg-subalgebra  $\RR^\bullet(X)\subseteq \Lambda^\bullet(X),$ where $\RR^n(X)$ is spanned by the elements $e^{x_0,\dots,x_n}$ such that $x_i\neq x_{i+1}$ for all $0\leq i\leq n-1$. 
A reader familiar with simplicial methods may note that $\Lambda^\bullet(X)$ has an obvious structure of a cosimplicial algebra, where the codegeneracy map $\sigma^i:\Lambda^n(X)\to \Lambda^{n-1}(X)$ is trivial on $e^{x_0,\dots,x_n}$ if~$x_i\neq x_{i+1}$ and $\sigma^i(e^{x_0,\dots,x_n})=e^{x_0,\dots,\hat x_i,\dots,x_n}$ if~$x_i=x_{i+1}.$  
Then $\RR^\bullet(X)$ is the associated normalised dg-algebra. 

For a digraph $G$, let us define the dg-algebra $\Omega^\bullet(G)$ of path cochains. 
First we set $\RR^\bullet(G)=\RR^\bullet(V(G)).$ 
Denote by $\NN^n(G)\subseteq \RR^n(G)$ the subspace spanned by elements $e^{x_0,\dots,x_n},$ where $(x_0,\dots,x_n)$ is not a path of $G.$ Then $\NN^*(G)$ is a graded ideal of $\RR^\bullet(G)$ but not necessarily a dg-ideal. Let $\JJ^\bullet(G)\triangleleft \RR^\bullet(G)$ be the smallest dg-ideal containing $\NN^*(G)$. 
Then the $n$-th component $\JJ^n(G)$ can be expressed as 
\begin{equation}
\JJ^n(G) = \NN^n(G) + \partial(\NN^{n-1}(G)).
\end{equation}
The dg-algebra of path cochains is defined as the quotient algebra 
\begin{equation}
\Omega^\bullet(G) = \RR^\bullet(G)/\JJ^\bullet(G).
\end{equation}

\subsection{Path cochain algebra as a quadratic algebra}

Denote by $\KK G$ the path algebra of $G.$ As a module, it is spanned by paths $(x_0,\dots,x_n)$ for $n\geq 0,$ including paths of length zero. 
The product of two paths is defined by 
\begin{equation}
(x_0,\dots,x_n)(y_0,\dots,y_m) 
=
\begin{cases}
(x_0,\dots,x_n,y_1,\dots,y_m), &\text{ if } x_n=y_0;\\
0,& \text{ else.}
\end{cases}
\end{equation}
The unit $1\in \KK G$ is the sum of paths of length zero  \[1\colonequals \sum_{v\in V(G)} (v).\]

Note that $\KK G$ is a graded algebra, where the  grading of a path $(x_0,\dots,x_n)$ equals its length, written as 
\[|(x_0,\dots,x_n)|=n.\] 
Let us write $(\KK G)^n$ as the $n$-th component of $\KK G$.

For any vertices $x,y\in V(G)$ such that $d(x,y)=2,$ consider the element $t_{x,y}$ defined as the linear sum of all $2$-paths $(x,v,y)$ from $x$ to $y$:
\begin{equation}
t_{x,y} = \sum_{v\in V(G)} (x,v,y) \in (\KK G)^2.
\end{equation}
We denote by $T\triangleleft \KK G$ the ideal generated by all such elements $t_{x,y}.$  

\begin{lemma}\label{lemma isomorphism} \ 
\begin{enumerate}
\item There is an isomorphism of graded algebras
\begin{equation}\label{eq:iso-R/N-KG}
\varphi : \RR^\bullet(G)/\NN^\bullet(G) \xrightarrow{\cong} \KK G
\end{equation}
sending $e^{x_0,\dots,x_n}+\NN^\bullet(G)$ to $(x_0,\dots,x_n)$, for any path $(x_0,\dots,x_n)$ in $G$. 
\smallskip
\item The image of $\JJ^\bullet(G)/\NN^\bullet(G)$ under $\varphi$ is the ideal $T$.
\end{enumerate}
\end{lemma}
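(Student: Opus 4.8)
The plan is to prove the two statements separately: part (1) by matching bases, and part (2) by a direct computation with the cochain differential.

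For part (1), I would first observe that $\RR^n(G)$ is free with basis the elements $e^{x_0,\dots,x_n}$ having no consecutive repetitions, and that $\NN^n(G)$ is free on the sub-basis consisting of those $e^{x_0,\dots,x_n}$ for which $(x_0,\dots,x_n)$ is not a path of $G$. Hence $\RR^n(G)/\NN^n(G)$ is free with basis indexed by the $n$-paths of $G$, exactly as $(\KK G)^n$ is, so $\varphi$ is a well-defined graded linear isomorphism. To see it is a morphism of algebras it suffices, by bilinearity, to check it on basis elements: both products are ``concatenate two paths along a common endpoint vertex, or else $0$'', and the concatenation of two paths of $G$ along a shared vertex is again a path of $G$, so no surviving product dies in the quotient; finally $\varphi$ sends the unit $\sum_{v\in V(G)}e^v$ of $\RR^\bullet(G)$ to $\sum_{v\in V(G)}(v)=1\in\KK G$.

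For part (2), I would use $\JJ^n(G)=\NN^n(G)+\partial(\NN^{n-1}(G))$ to see that the ideal $\JJ^\bullet(G)/\NN^\bullet(G)$ of $\RR^\bullet(G)/\NN^\bullet(G)$ is spanned by the classes of $\partial(e^{\omega})$ as $\omega$ ranges over tuples of vertices that are not paths of $G$, so everything reduces to computing $\varphi(\partial(e^\omega))$ for such $\omega$. For the inclusion $T\subseteq\varphi(\JJ^\bullet(G)/\NN^\bullet(G))$, take $\omega=(x,y)$ with $d(x,y)=2$, so that $e^{x,y}\in\NN^1(G)$; expanding $\partial(e^{x,y})=\sum_{v}e^{v,x,y}-\sum_{v}e^{x,v,y}+\sum_{v}e^{x,y,v}$ and reducing modulo $\NN^\bullet(G)$, only the middle terms indexed by $2$-paths $(x,v,y)$ survive, giving $\varphi(\partial(e^{x,y}))=-t_{x,y}$. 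Since $\JJ^\bullet(G)$ is a two-sided ideal of $\RR^\bullet(G)$ containing $\NN^\bullet(G)$ and $\varphi$ is an algebra isomorphism, $\varphi(\JJ^\bullet(G)/\NN^\bullet(G))$ is a two-sided ideal of $\KK G$ containing every $t_{x,y}$, hence it contains $T$. For the reverse inclusion, let $\omega=(x_0,\dots,x_{n-1})$ be any non-path and expand $\partial(e^\omega)$ by the insertion formula: the term obtained by inserting $v$ in position $i$ survives the quotient by $\NN^\bullet(G)$ precisely when $(x_0,\dots,x_{i-1},v,x_i,\dots,x_{n-1})$ is a path of $G$, which forces $(x_0,\dots,x_{i-1})$ and $(x_i,\dots,x_{n-1})$ to be paths and $(x_{i-1},v,x_i)$ to be a $2$-path. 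Since $\omega$ is not a path this can never happen for $i=0$ or $i=n$, and whenever it happens for $1\le i\le n-1$ the arrow $(x_{i-1},x_i)$ must be absent, so $d(x_{i-1},x_i)=2$ and the surviving position-$i$ terms assemble into $(-1)^i(x_0,\dots,x_{i-1})\,t_{x_{i-1},x_i}\,(x_i,\dots,x_{n-1})\in T$; summing over $i$ gives $\varphi(\partial(e^\omega))\in T$, so $\varphi(\JJ^\bullet(G)/\NN^\bullet(G))\subseteq T$.

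The step I expect to be the main obstacle is the bookkeeping in this last inclusion: determining exactly which inserted terms survive the quotient by $\NN^\bullet(G)$, ruling out the boundary insertions $i=0,n$, and recognizing the surviving families of terms as two-sided multiples of the generators $t_{x,y}$ — the genuinely delicate point being that at an insertion position $i$ whose two adjacent segments are both paths, the pair $(x_{i-1},x_i)$ cannot be an arrow, precisely because $\omega$ itself is not a path. Everything else (freeness of the relevant modules on subsets of the standard bases, the unit and product bookkeeping in part (1), and the vanishing of the remaining terms) is routine.
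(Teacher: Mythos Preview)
Your proof is correct. The paper itself does not argue the lemma at all; it simply cites \cite[Lemmas 5.2 \& 5.3]{ivanov2024diagonal}. Your write-up is therefore a self-contained replacement for that citation, and the two parts are handled exactly as one would expect: part~(1) by matching bases and checking the product and unit, and part~(2) by reducing $\partial(\NN^{n-1})$ modulo $\NN^n$.

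One small remark on the reverse inclusion in part~(2), just to make the bookkeeping explicit: once some insertion at position $i$ survives, both $(x_0,\dots,x_{i-1})$ and $(x_i,\dots,x_{n-1})$ are forced to be paths, so \emph{every} $v$ with $(x_{i-1},v,x_i)$ a $2$-path survives, and the position-$i$ contribution really is all of $(-1)^i(x_0,\dots,x_{i-1})\,t_{x_{i-1},x_i}\,(x_i,\dots,x_{n-1})$ rather than a partial sum. Moreover, since the two halves being paths forces every arrow of $\omega$ except possibly $(x_{i-1},x_i)$ to be present, there is at most one such $i$; your ``summing over $i$'' therefore has at most one nonzero term, which is perhaps worth saying out loud.
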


\begin{proof}
    See \cite[Lemmas 5.2 \& 5.3]{ivanov2024diagonal}.
\end{proof}

\begin{theorem}[{\cite[Th. 5.4]{ivanov2024diagonal}}]
\label{theorem:omega:description}
The isomorphism \eqref{eq:iso-R/N-KG} induces an isomorphism of dg-algebras
\[
\Omega^\bullet(G) \cong \KK G/T,
\]
where the differential on $\KK G/T$ is defined  by the formula  
\[
\partial (p+T) = \sum_{i=0}^{n+1}\  \sum_{v} (-1)^i (p_0,\dots,p_{i-1},v,p_{i},\dots,p_n)+T,
\]
for any path $p=(p_0,p_1,\ldots, p_n)$ in $G$,
and $v$ runs over all vertices such that \[(p_0,\dots,p_{i-1},v,p_{i},\dots,p_n)\] is a path in $G$.
\end{theorem}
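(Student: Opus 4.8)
The plan is to derive the statement from Lemma~\ref{lemma isomorphism} by a purely formal manipulation of quotients, followed by a short direct computation of how the differential of $\RR^\bullet(G)$ behaves modulo $\JJ^\bullet(G)$.

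\emph{First}, I would assemble the underlying graded-algebra isomorphism. Composing the quotient map, $\varphi$, and the projection $\KK G\epi \KK G/T$ gives a surjective homomorphism of graded algebras
\[
\psi\colon \RR^\bullet(G)\epi \RR^\bullet(G)/\NN^\bullet(G)\xrightarrow{\ \varphi\ } \KK G \epi \KK G/T .
\]
Because $\NN^\bullet(G)\subseteq \JJ^\bullet(G)$ and $\varphi$ is an isomorphism, Lemma~\ref{lemma isomorphism}(2) identifies $\ker\psi$ with $\JJ^\bullet(G)$: indeed $\varphi^{-1}(T)=\JJ^\bullet(G)/\NN^\bullet(G)$, whose preimage in $\RR^\bullet(G)$ is $\JJ^\bullet(G)$. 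Hence $\psi$ induces an isomorphism of graded algebras
\[
\bar\psi\colon \Omega^\bullet(G)=\RR^\bullet(G)/\JJ^\bullet(G)\xrightarrow{\ \cong\ }\KK G/T ,
\]
which sends the class of $e^{p}$ to the class of $p$ for every path $p$ of $G$, and annihilates the classes of $e^{x_0,\dots,x_n}$ with $(x_0,\dots,x_n)$ not a path (such elements already lie in $\NN^\bullet(G)\subseteq \JJ^\bullet(G)$).

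\emph{Second}, I would transport the differential. Since $\JJ^\bullet(G)$ is by definition a dg-ideal of $\RR^\bullet(G)$, the differential of $\RR^\bullet(G)$ descends to $\Omega^\bullet(G)$; carrying it across $\bar\psi$ makes $\KK G/T$ a dg-algebra and $\bar\psi$ a dg-isomorphism \emph{by construction}, so that $T$ is automatically a differential ideal and the only remaining task is to recognise this transported differential as the displayed formula. For a path $p=(p_0,\dots,p_n)$, the differential inherited from $\Lambda^\bullet(V(G))$ reads
\[
\partial(e^{p})=\sum_{i=0}^{n+1}\ \sum_{v\in V(G)}(-1)^i\, e^{p_0,\dots,p_{i-1},v,p_i,\dots,p_n}\in \RR^{n+1}(G),
\]
where the summands containing a consecutive repetition cancel in pairs and hence already vanish in $\RR^\bullet(G)$. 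Applying $\psi$ and using the description of $\bar\psi$ above, each remaining summand maps to $(p_0,\dots,p_{i-1},v,p_i,\dots,p_n)+T$ when the inserted tuple is a path in $G$ and to $0$ otherwise; thus $\bar\psi\,\partial\,\bar\psi^{-1}(p+T)$ equals precisely the right-hand side of the asserted formula, with $v$ ranging over the vertices for which the insertion produces a path. Since the classes of paths span $\KK G/T$, this pins the differential down and finishes the argument.

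\emph{Main obstacle.} Essentially all the content sits inside Lemma~\ref{lemma isomorphism}, and in particular in part~(2): the identification of $\varphi(\JJ^\bullet(G)/\NN^\bullet(G))$ with the ideal $T$ generated by the quadratic elements $t_{x,y}$ with $d(x,y)=2$. Granting the lemma, the theorem is bookkeeping. In a self-contained treatment the crux would instead be to show that, modulo the non-path subspace $\NN^n(G)$, the subspace $\partial(\NN^{n-1}(G))$ contributes nothing beyond $T$ in each degree --- equivalently, that applying $\partial$ to a non-path tuple cannot generate relations not already consequences of the length-two relations $t_{x,y}$ --- which is the delicate point established in \cite{ivanov2024diagonal} and which we take as given here.
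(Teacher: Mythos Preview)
The paper does not supply its own proof of this theorem; it simply cites \cite[Th.~5.4]{ivanov2024diagonal}, just as the preceding Lemma~\ref{lemma isomorphism} is cited from the same source. Your derivation of the theorem from that lemma is correct and is exactly the natural two-step argument (quotient bookkeeping plus an explicit transport of the differential), and your remark that the substantive content lies in Lemma~\ref{lemma isomorphism}(2) is accurate.
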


\subsection{Duality between path chains and cochains}

In this subsection, we assume that $\KK=\FF$ is a field. The isomorphism
\[
\RR^\bullet(G)\cong \Hom_\FF( \RR_\bullet(G),\FF), \quad
e^{x_0,\dots,x_n}\mapsto e_{x_0,\dots,x_n}
\]
induces an isomorphism by restricting to $\AA_\bullet(G)$:
\begin{equation}\label{eq isomorphism dual of A(G)}
\RR^\bullet(G)/\NN^\bullet(G)\cong \Hom_\FF( \AA_\bullet(G),\FF).
\end{equation}
Over $\FF$, the above isomorphism~\eqref{eq isomorphism dual of A(G)} induces an isomorphism 
\begin{equation}\label{eq isomorphism dual}
\Omega^\bullet(G)\cong \Hom_\FF (\Omega_\bullet(G),\FF).
\end{equation}
Hence $\Omega^\bullet(G)$ is dual to $\Omega_\bullet(G)$. 
See~\cite[Lemma 3.19]{grigor2012homologies} or~\cite[Prop. 3.16]{ivanov2024simplicial} for more details.

Given any $n$-paths $p$ and $q$ in $G$, the Kronecker delta $\delta_{p,q}$
determines a non-degenerate bilinear form
\begin{equation}\label{eq:bilinear form}
\langle - , - \rangle : \AA_n(G) \times (\FF G)^n \longrightarrow \FF\quad \text{by}\quad \langle e_p,q \rangle = \delta_{p,q}.
\end{equation}

\smallskip

\begin{lemma}\label{lemma:omega-orthogonal}
Let $T^n=T\cap (\FF G)^n$ be the $n$-th component of $T$. Then $\Omega_n(G)$ is the orthogonal complement of $T^n$ with respect to the bilinear form \eqref{eq:bilinear form}. 
\end{lemma}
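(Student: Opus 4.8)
The plan is to prove the equivalent statement that $\Omega_n(G)$ equals the orthogonal complement $(T^n)^\perp := \{\omega \in \AA_n(G) : \langle \omega, q\rangle = 0 \text{ for all } q \in T^n\}$, working directly from the defining formula \eqref{eq defn Omega n}. Since the form \eqref{eq:bilinear form} is non-degenerate and all the spaces $\AA_n(G)$, $(\FF G)^n$ are finite-dimensional, proving $\Omega_n(G) = (T^n)^\perp$ is the same as proving the dual identity $\Omega_n(G)^\perp = T^n$ inside $(\FF G)^n$; I will aim at the first form.

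First I would fix $\omega = \sum_p \alpha_p\, e_p \in \AA_n(G)$, the sum running over $n$-paths $p$ of $G$, and expand $\partial \omega$ in the basis $\{e_r\}$ of $\RR_{n-1}(G)$ indexed by $(n-1)$-tuples $r = (r_0,\dots,r_{n-1})$ without consecutive repetitions. The coefficient of $e_r$ in $\partial\omega$ is $\sum_{j=0}^n (-1)^j \sum_v \alpha_{(r_0,\dots,r_{j-1},v,r_j,\dots,r_{n-1})}$, where $v$ ranges over the vertices for which the displayed tuple is an $n$-path of $G$ (the coefficient $\alpha$ vanishes on all non-paths). By \eqref{eq defn Omega n}, $\omega \in \Omega_n(G)$ if and only if this coefficient vanishes for every $(n-1)$-tuple $r$ that is \emph{not} a path of $G$.

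The combinatorial heart of the argument is to match, for each non-path $r$, the coefficient of $e_r$ in $\partial\omega$ with $\pm \langle \omega, g_r\rangle$ for a suitable generator $g_r$ of $T^n$. Call $k$ a \emph{bad position} of $r$ if $(r_k, r_{k+1})$ is not an arrow. If $r$ has two or more bad positions, a single insertion cannot remove them all, so every tuple appearing in the sum is a non-path, $\alpha$ is zero on all of them, and the coefficient of $e_r$ is automatically $0$. If $r$ has exactly one bad position $k$, an inserted tuple can be a path only when the insertion is made at position $k+1$, so the coefficient reduces to $(-1)^{k+1}\sum_v \alpha_{(r_0,\dots,r_k,v,r_{k+1},\dots,r_{n-1})}$, the sum over $v$ with $(r_k,v)$ and $(v,r_{k+1})$ arrows. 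Note $d(r_k,r_{k+1})$ is neither $0$ nor $1$ (no consecutive repetition, and $(r_k,r_{k+1})$ is not an arrow); if $d(r_k,r_{k+1}) \neq 2$ the sum is empty and the coefficient is $0$; if $d(r_k,r_{k+1}) = 2$ it equals $(-1)^{k+1}\langle \omega,\ (r_0,\dots,r_k)\cdot t_{r_k,r_{k+1}}\cdot(r_{k+1},\dots,r_{n-1})\rangle$. Conversely, as the path $(r_0,\dots,r_k)$ ending at a vertex $x$, the path $(y,\dots,r_{n-1})$ starting at a vertex $y$ with $d(x,y)=2$, and $k$ vary, these elements $(r_0,\dots,r_k)\,t_{x,y}\,(r_{k+1},\dots,r_{n-1})$ run over exactly the nonzero generators $p\cdot t_{x,y}\cdot p'$ of $T^n$ (the generators with mismatched endpoints being zero), hence span $T^n$. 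Putting this together: $\omega \in \Omega_n(G)$ iff $\langle \omega, g\rangle = 0$ for all $g$ in this spanning set, iff $\langle \omega, q\rangle = 0$ for all $q \in T^n$ by bilinearity, i.e. $\Omega_n(G) = (T^n)^\perp$.

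The step I expect to be the main obstacle is purely organizational: tracking insertion positions and signs, handling the ``at most one bad edge'' case split cleanly, and verifying that the correspondence between non-path $(n-1)$-tuples with a single bad edge at directed distance two and the (nonzero) generators of $T^n$ is exact. A more structural alternative avoids this bookkeeping: combine the isomorphism $\Omega^\bullet(G) \cong \FF G/T$ of Theorem~\ref{theorem:omega:description} with the duality \eqref{eq isomorphism dual}. The former presents $\Omega^n(G)$ as $(\FF G)^n/T^n$, while the latter, via \eqref{eq isomorphism dual of A(G)} and the form \eqref{eq:bilinear form}, presents it as the quotient of $(\FF G)^n \cong \Hom_\FF(\AA_n(G),\FF)$ by the annihilator of $\Omega_n(G)$; since both identifications are induced by $\varphi$ and the dual basis, the two subspaces coincide, giving $\Omega_n(G)^\perp = T^n$ and hence $\Omega_n(G) = (T^n)^\perp$ by non-degeneracy. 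The cost of this route is checking that the two quotient maps agree on the nose, which is where the care then goes.
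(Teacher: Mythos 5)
Your main argument is correct, and it is a genuinely different route from the paper's. You prove the statement by brute force: expand $\partial\omega$ for $\omega=\sum_p\alpha_p e_p\in\AA_n(G)$ in the basis of $\RR_{n-1}(G)$, observe that membership in $\Omega_n(G)$ is exactly the vanishing of the coefficient of $e_r$ for every non-path tuple $r$, and then classify those $r$: two or more bad pairs (or one bad pair at distance $\neq 2$) give an automatically zero coefficient, while one bad pair at distance $2$ gives $\pm\langle\omega,\,p\cdot t_{x,y}\cdot p'\rangle$ for a spanning element of $T^n$ — and these spanning elements are exhausted as $r$ varies, since $T$ is the two-sided ideal generated by the $t_{x,y}$ and $\KK G$ is spanned by paths. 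The bookkeeping you flag (only the insertion at position $k+1$ can repair the unique bad pair, sign $(-1)^{k+1}$, the two flanking segments of $r$ are genuine paths) all checks out. The paper instead argues structurally: it works with the ambient pairing $\RR_n(G)\times\RR^n(G)\to\FF$, identifies $\NN^n(G)$ and $\partial(\NN^{n-1}(G))$ as the annihilators of $\AA_n(G)$ and $\partial^{-1}(\AA_{n-1}(G))$ respectively, so that by \eqref{eq defn Omega n} the annihilator of $\Omega_n(G)$ is $\JJ^n(G)=\NN^n(G)+\partial(\NN^{n-1}(G))$, and then transports this through $\varphi$ using Lemma~\ref{lemma isomorphism}\,(2), which says $\varphi(\JJ^n/\NN^n)=T^n$. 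That proof is shorter but leans on the cited identification of $\JJ^\bullet/\NN^\bullet$ with $T$ (and on annihilators of intersections being sums of annihilators in finite dimensions); yours re-derives that identification by hand in degree $n$ and is self-contained. The ``structural alternative'' you sketch at the end is essentially the paper's proof, and your worry there about the two quotient maps agreeing is precisely what Lemma~\ref{lemma isomorphism} settles.
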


\begin{proof}
 First, we consider the following bilinear form
\begin{equation}\label{eq bilinear form 2}
 \RR_n(G)\times \RR^n(G) \to \FF, \quad B(e_p,e^q)=\delta_{p,q},
\end{equation}
where $\delta_{p,q}$ is the Kroneck delta.
With respect to~\eqref{eq bilinear form 2}, one can check that~$\NN^n(G)$ and $\partial (\NN^{n-1}(G))$ are the orthogonal complements of $\AA_n(G)$ and~$\partial^{-1}(\AA_{n-1}(G))$, respectively.
Then it follows from~\eqref{eq defn Omega n} that the orthogonal complement of $\Omega_n(G)$ with respect to the bilinear form~\eqref{eq bilinear form 2} is 
\[
\JJ^n(G)=\NN^n(G)+\partial (\NN^{n-1}(G)).
\]

Second, the bilinear form~\eqref{eq:bilinear form} is induced from~\eqref{eq bilinear form 2} by restricting~$\RR_n(G)$ to~$\AA_n(G)$ and the identifications
\[
\Hom_\FF(\AA_n,\FF)\cong \RR^n(G)/\NN^n(G)\xrightarrow{\varphi} (\FF G)^n,
\]
where $\varphi$ is an isomorphism given in~\eqref{eq:iso-R/N-KG}. Hence the orthogonal complement of~$\Omega_n(G)$ with respect to~\eqref{eq:bilinear form} is the image of $\JJ^n(G)/\NN^n(G)$ under $\varphi$, which is~$T^n$ by Lemma~\ref{lemma isomorphism}.
\end{proof}

\begin{corollary}\label{corollary:duality}
There is a non-degenerate bilinear form
\[
\langle -,- \rangle : \Omega_n(G) \times (\FF G/T)^n \longrightarrow \FF
\]
defined by 
$\langle \sum_p \alpha_p\cdot e_p , q +T^n  \rangle = \sum_p \alpha_p\cdot \delta_{p,q},$ where $\delta_{p,q}$ is the Kronecker delta. 
\end{corollary}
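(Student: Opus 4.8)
\emph{Proof proposal.} The plan is to deduce the corollary formally from the non-degenerate pairing \eqref{eq:bilinear form} on $\AA_n(G)\times(\FF G)^n$ together with Lemma~\ref{lemma:omega-orthogonal}, using only elementary linear algebra over $\FF$ and the fact that $G$ is finite, so that $\AA_n(G)$ and $(\FF G)^n$ are finite-dimensional and have equal dimension (the latter because \eqref{eq:bilinear form} is non-degenerate). Since $T$ is generated by the homogeneous elements $t_{x,y}$, it is a graded ideal, so $(\FF G/T)^n = (\FF G)^n/T^n$, which makes the target of the claimed pairing transparent.

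First I would check that the formula is well defined, i.e.\ independent of the representative $q$ of the class $q+T^n$. Concretely, if $\omega=\sum_p\alpha_p\, e_p\in\Omega_n(G)$ and $t\in T^n$, then $\langle\omega,t\rangle=0$: indeed, Lemma~\ref{lemma:omega-orthogonal} identifies $\Omega_n(G)$ with the orthogonal complement of $T^n$ inside $\AA_n(G)$, so in particular $\Omega_n(G)$ pairs trivially with $T^n$. Hence $\langle\omega,q\rangle=\langle\omega,q'\rangle$ whenever $q-q'\in T^n$, and the bilinear map \eqref{eq:bilinear form} descends to a bilinear map $\Omega_n(G)\times (\FF G)^n/T^n=\Omega_n(G)\times(\FF G/T)^n\to\FF$; bilinearity is inherited from \eqref{eq:bilinear form}.

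For non-degeneracy there are two statements to verify. On the left: if $\omega\in\Omega_n(G)$ pairs to zero with every class $q+T^n$, then $\langle\omega,q\rangle=0$ for \emph{every} $q\in(\FF G)^n$, because the projection $(\FF G)^n\to(\FF G/T)^n$ is surjective; since \eqref{eq:bilinear form} is non-degenerate this forces $\omega=0$. On the right: if a class $q+T^n$ pairs to zero with every $\omega\in\Omega_n(G)$, then $q$ lies in the orthogonal complement of $\Omega_n(G)$ taken inside $(\FF G)^n$. Here I would invoke the standard fact that for a non-degenerate pairing between finite-dimensional spaces and a subspace $U$, the double orthogonal complement satisfies $(U^{\perp})^{\perp}=U$ (a dimension count, using $\dim\AA_n(G)=\dim(\FF G)^n$). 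Applying this with $U=T^n\subseteq(\FF G)^n$ and $U^{\perp}=\Omega_n(G)$ from Lemma~\ref{lemma:omega-orthogonal} yields $\Omega_n(G)^{\perp}=((T^n)^{\perp})^{\perp}=T^n$, so $q\in T^n$, i.e.\ $q+T^n=0$.

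I do not expect a genuine obstacle here: the argument is a formal consequence of Lemma~\ref{lemma:omega-orthogonal} and the non-degeneracy of \eqref{eq:bilinear form}. The only points requiring a little care are bookkeeping of which of the two spaces each orthogonal complement lives in, and confirming that the double-perp identity applies, which it does because $G$ is finite and \eqref{eq:bilinear form} is a non-degenerate pairing of finite-dimensional spaces of equal dimension.
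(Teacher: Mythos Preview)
Your proposal is correct and matches the paper's approach: the paper states this as an immediate corollary of Lemma~\ref{lemma:omega-orthogonal} without giving a separate proof, and your argument spells out exactly the standard linear-algebra details (well-definedness from $\Omega_n(G)\perp T^n$, non-degeneracy from the double-perp identity for finite-dimensional pairings) that make the deduction work.
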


\subsection{Rational and integral path (co)homology} 

\begin{proposition}\label{prop:rationalisation}
For any finite graph $G$, there are isomorphisms 
\begin{equation}\label{eq:rationalisation}
\Omega_\bullet(G;\QQ) \cong \Omega_\bullet(G;\ZZ) \otimes \QQ \hspace{1cm} \PH_*(G;\QQ)\cong \PH_*(G;\ZZ)\otimes \QQ;
\end{equation}
\begin{equation}
\Omega^\bullet(G;\QQ) \cong 
\Omega^\bullet(G;\ZZ) \otimes \QQ, 
\hspace{1cm} 
\PH^*(G;\QQ)\cong  \PH^*(G;\ZZ)\otimes  \QQ.
\end{equation}
\end{proposition}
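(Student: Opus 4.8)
The plan is to reduce both families of isomorphisms to a single fact: $\QQ$ is flat over $\ZZ$, so the functor $-\otimes_\ZZ\QQ$ is exact and therefore commutes with kernels, cokernels, and (co)homology. Everything else is just the observation that all the modules and maps in sight are given by the \emph{same} integer formulas over every coefficient ring.

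\emph{The chain side.} First I would record that $\RR_\bullet(G;\KK)$ is literal base change from $\ZZ$: $\RR_n(G;\KK)$ is free on the set of vertex tuples without consecutive repetitions, $\AA_n(G;\KK)$ is the free submodule spanned by the subset of $n$-paths, and $\partial$ is the alternating sum with coefficients $\pm1$. Hence $\RR_n(G;\ZZ)\otimes_\ZZ\QQ\cong\RR_n(G;\QQ)$ and $\AA_n(G;\ZZ)\otimes_\ZZ\QQ\cong\AA_n(G;\QQ)$ compatibly with $\partial$, and $\RR_{n-1}/\AA_{n-1}$ is again a free abelian group. By \eqref{eq defn Omega n}, $\Omega_n(G;\ZZ)$ is the kernel of the composite
\[
\AA_n(G;\ZZ)\xrightarrow{\ \partial\ }\RR_{n-1}(G;\ZZ)\epi \RR_{n-1}(G;\ZZ)/\AA_{n-1}(G;\ZZ).
\]
Exactness of $-\otimes_\ZZ\QQ$ lets it pass through this cokernel and through this kernel, and combined with the previous identifications this gives $\Omega_n(G;\ZZ)\otimes_\ZZ\QQ\cong\Omega_n(G;\QQ)$, naturally in $n$ and over $\partial$; thus $\Omega_\bullet(G;\QQ)\cong\Omega_\bullet(G;\ZZ)\otimes_\ZZ\QQ$ as chain complexes. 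Applying exactness once more, now to the homology of $\Omega_\bullet(G;\ZZ)$, yields $\PH_*(G;\QQ)\cong\PH_*(G;\ZZ)\otimes_\ZZ\QQ$.

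\emph{The cochain side.} Here I would use the description $\Omega^\bullet(G;\KK)\cong\KK G/T$ of Theorem~\ref{theorem:omega:description}. The path algebra $\KK G$ is free on the set of paths, so $\ZZ G\otimes_\ZZ\QQ\cong\QQ G$ as graded algebras; the ideal $T$ is generated by the fixed integer elements $t_{x,y}=\sum_v(x,v,y)$. Hence $T_\ZZ\otimes_\ZZ\QQ\to T_\QQ$ is onto by construction and injective because $T_\ZZ\hookrightarrow\ZZ G$ and $\QQ$ is flat over $\ZZ$, so $T_\ZZ\otimes_\ZZ\QQ\cong T_\QQ$ inside $\QQ G$. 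Right exactness of $-\otimes_\ZZ\QQ$ then gives $\Omega^n(G;\ZZ)\otimes_\ZZ\QQ\cong\Omega^n(G;\QQ)$, compatibly with product and differential (again the same integer formulas), hence $\Omega^\bullet(G;\QQ)\cong\Omega^\bullet(G;\ZZ)\otimes_\ZZ\QQ$ as dg-algebras; and exactness applied to cohomology gives $\PH^*(G;\QQ)\cong\PH^*(G;\ZZ)\otimes_\ZZ\QQ$.

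The only real subtlety is that $\Omega_n(G;\KK)$ is defined by an intersection/kernel rather than by an evident base-change recipe, so one genuinely needs flatness of $\QQ$ and not merely a formal manipulation of the definitions. I would also deliberately avoid deducing the cochain statements from a $\ZZ$-linear duality with $\Omega_\bullet(G;\ZZ)$: as shown later in the paper, $\Omega^\bullet(G;\ZZ)$ can carry torsion and hence is \emph{not} the $\ZZ$-dual of the free complex $\Omega_\bullet(G;\ZZ)$ — but this is harmless above, since the torsion is killed by $\otimes_\ZZ\QQ$ and flatness computes the (co)homology regardless.
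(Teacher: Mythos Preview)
Your proof is correct and follows essentially the same approach as the paper: both exploit exactness of $-\otimes_\ZZ\QQ$ applied to a kernel presentation of $\Omega_n$ and a cokernel presentation of $\Omega^n$. The only cosmetic difference is that the paper uses the exact sequences $0\to\Omega_n\to\RR_n\to\RR_n/\AA_n\oplus\RR_{n-1}/\AA_{n-1}$ and $\NN^n\oplus\NN^{n-1}\to\RR^n\to\Omega^n\to0$, whereas you use the tighter sequence $0\to\Omega_n\to\AA_n\to\RR_{n-1}/\AA_{n-1}$ on the chain side and the $\KK G/T$ description of Theorem~\ref{theorem:omega:description} on the cochain side; either choice works equally well.
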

\begin{proof}
It is obvious that there are isomorphisms
\begin{equation}\label{eq isomorphism rational and integral coefficients}
\RR_\bullet(G;\QQ)\cong \RR_\bullet(G;\ZZ)\otimes \QQ
\text{ and } \AA_*(G;\QQ)\cong \AA_*(G;\ZZ)\otimes \QQ\, .
\end{equation}
 For any commutative ring $\KK$, there is an exact sequence 
\begin{equation}\label{eq ses}
  0 \longrightarrow \Omega_n(G;\KK) \longrightarrow \RR_n(G;\KK) \overset{f}\longrightarrow \frac{\RR_n(G;\KK)}{\AA_n(G;\KK)}  \oplus \frac{\RR_{n-1}(G;\KK)}{\AA_{n-1}(G;\KK)},  
\end{equation}
where $f(x)=(x+\AA_n,\partial(x)+\AA_{n-1})$ (see \cite[Lemma 3.4]{ivanov2024simplicial}), and the map
\[\Omega_n(G;\KK) \to \RR_n(G;\KK)\]
is an embedding. 
Since $-\otimes \QQ$ is an exact functor, we obtain a diagram of exact sequences
\[
\begin{tikzcd}
 0 \ar[r]& \Omega_n(G;\ZZ)\otimes \QQ \ar[r]\ar[d]& \RR_n(G;\ZZ)\otimes \QQ \ar[r,""]\ar[d,"\cong"] &\left(\frac{\RR_n(G;\ZZ)}{\AA_n(G;\ZZ)}  \oplus \frac{\RR_{n-1}(G;\ZZ)}{\AA_{n-1}(G;\ZZ)}\right)\otimes \QQ\ar[d,"\cong"]\\
 0 \ar[r]& \Omega_n(G;\QQ) \ar[r]& \RR_n(G;\QQ) \ar[r,""] &\frac{\RR_n(G;\QQ)}{\AA_n(G;\QQ)}  \oplus \frac{\RR_{n-1}(G;\QQ)}{\AA_{n-1}(G;\QQ)}   
\end{tikzcd}
\]
where the top row is obtained from \eqref{eq ses} for $\KK=\ZZ$ tensored with $\QQ$. 
The right two vertical maps are isomorphism by~\eqref{eq isomorphism rational and integral coefficients}, so is the left one.
 This means 
\begin{equation}\label{eq isomorphism rational and integral tensered}
\Omega_\bullet(G;\QQ)\cong \Omega_\bullet(G;\ZZ)\otimes \QQ.
\end{equation}
 Since $-\otimes \QQ$ is exact, it commutes with homology. Hence the isomorphism~\eqref{eq isomorphism rational and integral tensered} implies \[\PH_*(G;\QQ)\cong \PH_*(G;\ZZ)\otimes \QQ.\]

The argument for the dual version is similar. It is based on the usage of the exact sequence 
\begin{equation}
\NN^n(G;\KK) \oplus \NN^{n-1}(G;\KK) \overset{g}\longrightarrow \RR^n(G;\KK) \longrightarrow \Omega^n(G;\KK) \longrightarrow 0,
\end{equation}
where $g(x,y)=x+\partial(y).$
\end{proof}

\section{Graphs of short moves}\label{sec graphs of short moves}

To describe a basis for the path (co)chain complex, we introduce the concept of short moves between two different $n$-paths in a digraph $G$. This is central to defining the graph of short moves and its properties, which will be detailed next.

\subsection{Short moves}
For $n\geq 0$,
we say that two different $n$-paths 
\[
\text{$p=(p_0,\ldots,p_n)$ and $q=(q_0,\ldots,q_n)$ }
\]
in $G$ differ by a short move, if  
there is $1\leq i\leq n-1$ such that $p_j=q_j$ for each~$j\neq i$ and 
$d(p_{i-1},p_{i+1})=2$. In particular, if $n=0,1$ there are no short moves between $n$-paths. 

\begin{equation}
\begin{tikzcd}
& & & p_{i}\ar[rd] & & & \\
p_0 \ar[r]  & \cdots  \ar[r] & p_{i-1}\ar[ru]\ar[dr]  &   & p_{i+1}\ar[r] & \cdots \ar[r] & p_n \\
& & & q_{i}\ar[ru] & & & 
\end{tikzcd}
\end{equation}
Note that the equality $d(p_{i-1},p_{i+1})=2$ means that there is no  arrow from $p_{i-1}$ to~$p_{i+1}$ and $p_{i-1}\neq p_{i+1}.$ If we need to specify the index $i,$ we say that it is a short move of type $i.$

\begin{definition}\label{def graph of short moves}
For $n\geq 0$, the graph of short moves of $G$, denoted by~$\SS_n=\SS_n(G)$, is an undirected simple graph whose vertices are $n$-paths of $G$, and there is an edge between two distinct $n$-paths if they differ by a short move.
Connected components of $\SS_n$ are called $\SS_n$-classes. We also label the edges of~$\SS_n$ with $n-1$ colors.  An edge has color~$i$ if it corresponds to a short move of type~$i$.
\end{definition}

Note that $\SS_0$ is a discrete graph consisting of the vertices of~$G$, and~$\SS_1$ is a discrete graph whose vertices correspond to arrows in $G$.

\begin{remark} In 
\cite{ivanov2024diagonal}, the authors introduced a more general notion of $\ell$-short move for any~$\ell\geq 2,$ such that the short moves defined above are $2$-short moves. One can similarly define a graph $\SS_{n,\ell}$ using $\ell$-short moves for $\ell\geq 2.$ The $\ell$-short moves lead to an equivalence relation on paths, called $\ell$-short congruence. This congruence relation was used in \cite{ivanov2024diagonal} for the magnitude homology theory. Specifically, it was used to describe digraphs such that $\MH_{2,k}(G;\ZZ)=0$ for any $k>\ell.$ 
\end{remark}

\begin{example}\label{example:grid}
Let $G$ be one of the two following digraphs:
\begin{equation}
\begin{tikzcd}
0 \ar[r] \ar[d] & 1\ar[r] \ar[d] & 2 \ar[d] \\
2\ar[r] & 4\ar[r]  & 5
\end{tikzcd}
\hspace{1cm}
\begin{tikzcd}
0 \ar[r] \ar[rr,bend left = 10mm] \ar[d] & 1\ar[r] \ar[d] & 2 \ar[d] \\
2\ar[r] \ar[rr,bend right = 10mm] & 4\ar[r]  & 5
\end{tikzcd}
\end{equation}
Then $\SS_3$ can be described as follows.
\begin{equation}
\begin{tikzcd}
0125 \ar[r,-,"2"] & 0145 \ar[r,-,"1"] & 0245
\end{tikzcd}
\end{equation}
\end{example}
\begin{example}\label{example:general_grid} 
For both of the digraphs 
\begin{equation}
\begin{tikzcd}[row sep=3pt]
& \bullet \ar[r] \ar[rd]  & \bullet \ar[rdd] & \\
&    \bullet \ar[r] \ar[rd] & \bullet \ar[rd]  & \\
\bullet  \ar[r] \ar[ru] \ar[rd] \ar[ruu] \ar[rdd] & \bullet \ar[r] \ar[rd] & \bullet \ar[r]  & \bullet  
\\
  & \bullet\ar[r] \ar[rd]  & \bullet\ar[ru] &    \\  
& \bullet \ar[r]  & \bullet \ar[ruu]  & 
\end{tikzcd}   
\hspace{5mm}
\begin{tikzcd}[row sep=3pt]
& \bullet \ar[r] \ar[rd]  & \bullet \ar[rdd] & \\
&    \bullet \ar[r] \ar[rd] & \bullet \ar[rd]  & \\
\bullet \ar[rruu,bend left = 18mm] \ar[r] \ar[ru] \ar[rd] \ar[ruu] \ar[rdd] & \bullet \ar[r] \ar[rd] & \bullet \ar[r]  & \bullet  
\\
  & \bullet\ar[r] \ar[rd]  & \bullet\ar[ru] &    \\  
& \bullet \ar[r] \ar[rruu,bend right = 18mm] & \bullet \ar[ruu]  & 
\end{tikzcd}    
\end{equation}
$\SS_3$ is a line graph with an odd number of vertices
\begin{equation}
\begin{tikzcd}
\bullet \ar[r,-,"1"] & \bullet \ar[r,-,"2"] & \bullet  \ar[r,white,"\color{black}{\dots}"] & \bullet  \ar[r,-,"2"]  &  \bullet
\end{tikzcd}
\end{equation}
\end{example}

\begin{example}\label{example:cube}
Let $G$ be a directed $3$-cube.
\begin{equation}
\begin{tikzcd}[row sep=7pt, column sep=7pt]
0\ar[rr]\ar[dd] \ar[rd] & & 1\ar[dd] \ar[dr] \\
  & 3 \ar[rr] \ar[dd] & & 5\ar[dd] \\
2\ar[rr] \ar[rd] & & 4\ar[rd]  \\
  & 6 \ar[rr] & &  7 
\end{tikzcd}
\end{equation}
Then $\SS_3$ is a hexagon. 
\begin{equation}
\begin{tikzcd}[row sep=15pt, column sep=15pt]
&   0357 \ar[r,-,"2"] & 0367  \ar[rd,-,"1"]  & \\
0157\ar[ru,-,"1"]  &&& 0267 \ar[ld,-,"2"] \\
& 0147 \ar[lu,-,"2"]  & 0247 \ar[l,-,"1"] &
\end{tikzcd}
\end{equation}
\end{example}

\begin{example}\label{example:trapezohedron}
Let $G$ be a ``graph of trapezohedron''. 
\begin{equation}
\begin{tikzcd}[row sep=3pt]
& \bullet \ar[r] \ar[rd]  & \bullet \ar[rdd] & \\
&    \bullet \ar[r] \ar[rd] & \bullet \ar[rd]  & \\
\bullet \ar[r] \ar[ru] \ar[rd] \ar[ruu] \ar[rdd] & \bullet \ar[r] \ar[rd] & \bullet \ar[r]  & \bullet  
\\
  & \bullet\ar[r] \ar[rd]  & \bullet\ar[ru] &    \\  
& \bullet \ar[r] \ar[ruuuu] & \bullet \ar[ruu]  & 
\end{tikzcd}    
\end{equation}
Then $\SS_3$ is an even cycle. 
\begin{equation}
\begin{tikzcd}
&   \bullet \ar[r,-,"2"] & \bullet  \ar[rd,-,"1"]  & \\
\bullet \ar[ru,-,"1"]  &&& \bullet \ar[ld,-,"2"] \\
& \bullet \ar[lu,-,"2"]  & \bullet \ar[l,white,"\textcolor{black}{\dots}"] &
\end{tikzcd}
\end{equation}
\end{example}

\subsection{Thin, thick and bipartite \texorpdfstring{$\SS_n$}{}-classes}

We say that $G$ is a digraph without multisquares if for any two vertices $x,y$ such that $d(x,y)=2$, there are at most two distinct $2$-paths from $x$ to $y$.

\begin{definition}\label{def thin and thick}
Let $G$ be a digraph without multisquares.
\begin{enumerate}
\item A pair $(x,y)$ with $d(x,y)=2$ is called \emph{thin}  if there is only one $2$-path from~$x$ to $y$, and \emph{thick} if there are two different $2$-paths from $x$ to~$y$.
\item An $n$-path $(p_0,p_1,\ldots, p_n)$ is called \emph{thin} if there exists $0<i<n$ such that the pair~$(p_{i-1},p_{i+1})$ is thin. An $\SS_n$-class is called \emph{thin} if it contains at least one thin path. An $\SS_n$-class is \emph{thick} if it is not thin. 
 \end{enumerate}
\end{definition}
In other words, an $\SS_n$-class is thick if all its elements are not thin paths. 
In particular, every $\SS_n$-class is thick if $n=0$ or $1$.
In Examples~\ref{example:grid} and   \ref{example:general_grid}, the~$\SS_3$-class is thin for the left-hand digraph and thick for the right-hand digraph. 
In Examples~\ref{example:cube} and~\ref{example:trapezohedron} the $\SS_3$-classes are thick. 

\begin{definition}\label{def bipartite Sn class}
An $\SS_n$-class is called \emph{bipartite} if it is bipartite as a graph. 
\end{definition}
In Examples~\ref{example:grid}, \ref{example:general_grid}, \ref{example:cube} and~\ref{example:trapezohedron}, the $\SS_3$-classes are bipartite.  In Subsection~\ref{sec main example}, we will give an  example that has a non-bipartite $\SS_4$-class
due to the existence of a~9-cycle in this $\SS_4$-class.

\begin{proposition} \label{prop:S^3-bipartite}
Let $G$ be a digraph without multisquares. Then any $\SS_n$-class is bipartite for $n\leq 3$. In other words, the graph $\SS_n$ is bipartite for $n\leq 3$.
\end{proposition}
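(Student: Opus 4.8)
The plan is to reduce at once to $n = 3$ and then exploit the no-multisquares hypothesis to bound the vertex degrees in $\SS_3$. For $n \leq 1$ the graph $\SS_n$ is discrete, hence trivially bipartite. For $n = 2$ the only short moves have type $1$: a $2$-path $(p_0,p_1,p_2)$ has a neighbour in $\SS_2$ precisely when $(p_0,p_2)$ is a thick pair, and then --- since $G$ has no multisquares --- there is exactly one other $2$-path $(p_0,q_1,p_2)$; so each $\SS_2$-class is a single vertex or a single edge, which is bipartite. This leaves the case $n = 3$.

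For $n = 3$ the first step is to show that $\SS_3$ has maximum degree $2$, and in fact that every vertex meets at most one edge of each of the two colors. Fix a $3$-path $p = (p_0,p_1,p_2,p_3)$. A type-$1$ neighbour is obtained by keeping $p_0,p_2,p_3$ and replacing $p_1$; since the pair $(p_0,p_2)$ admits at most two $2$-paths, there is at most one admissible choice $q_1 \neq p_1$, hence at most one type-$1$ neighbour. Symmetrically, $(p_1,p_3)$ gives at most one type-$2$ neighbour. Moreover a type-$1$ neighbour has $q_1 \neq p_1$ and $q_2 = p_2$, while a type-$2$ neighbour has $q_1 = p_1$ and $q_2 \neq p_2$, so the two are distinct. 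Thus $\deg_{\SS_3}(p) \leq 2$.

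Since $G$, and hence $\SS_3$, is finite, it follows that each $\SS_3$-class is a finite path or a finite cycle. A path is bipartite. For a cycle $C$, every vertex has degree $2$ in $C$, so by the previous step it is incident in $C$ to exactly one edge of color $1$ and one edge of color $2$; hence the colors alternate around $C$, forcing $C$ to have even length. An even cycle is bipartite, so every $\SS_3$-class --- and therefore $\SS_3$ itself --- is bipartite.

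The argument is short; the only point requiring care is the bookkeeping with the definition of a short move (the index range $1 \leq i \leq n-1$, the condition $d(p_{i-1},p_{i+1}) = 2$, and the convention that paths have distinct consecutive vertices, which is what makes the counting of neighbours clean). It is also worth recording why this reasoning breaks for $n \geq 4$: a $4$-path can simultaneously admit type-$1$, type-$2$ and type-$3$ moves, so $\SS_4$ can have vertices of degree $3$ and the color-alternation argument no longer applies --- in line with the non-bipartite $\SS_4$-class exhibited in Subsection~\ref{sec main example}.
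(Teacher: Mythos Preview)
Your proof is correct and follows essentially the same approach as the paper: both arguments dispatch $n\leq 2$ quickly and, for $n=3$, use the no-multisquares hypothesis to show each vertex of $\SS_3$ meets at most one edge of each color, whence the components are paths or cycles with alternating colors and hence even. Your write-up is somewhat more explicit for $n=2$ and adds a helpful remark on why the argument fails for $n\geq 4$, but the substance is identical.
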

\begin{proof} For $n\leq 2$ the statement is obvious. Let us prove it for $n=3.$ 
The graph $\SS_3$ has edge labelled by two colors $1$ and $2.$
Since $G$ has no multisquares, any vertex~$p$ of $\SS_3$ is adjacent to at most one edge of color $1$ and at most one edge of color $2.$ Therefore any vertex of  $\SS_3$  has degree at most $2.$ Hence $\SS_3$ is a union of cycles and line graphs. Moreover, the two edges have different colors for any vertex of $\SS_3$ with degree two. It follows that all the cycles are even. Therefore $\SS_3$ is bipartite. 
\end{proof}

\section{The basis of the path (co)chain complex}\label{section basis}

In this section, a digraph $G$ is assumed to be without multisquares unless stated otherwise.
We continue to let $\KK$ be a commutative ring and $\FF$ be a field.

\subsection{The basis of \texorpdfstring{$\Omega^n$}{}} 

The following lemma is used to describe the basis of~$\Omega^n(G)$.

\begin{lemma}\label{lemma:module_of_graph}
Let $\Gamma$ be an undirected simple graph, $\mathcal V=\KK\cdot V(\Gamma)$ be the free module generated by the set of vertices, and $\mathcal E \leq \mathcal V$ be its submodule generated by elements of the form $v+v',$ where $(v,v')\in E(\Gamma).$ Assume that $B\subseteq V(\Gamma)$ is a complete set of representatives of bipartite connected components of $\Gamma,$ and  $N\subseteq V(\Gamma)$ is a complete set of representatives of non-bipartite connected components of $\Gamma.$ 
Then the obvious maps $B,N\to \mathcal V/\mathcal E$ induce an isomorphism 
\begin{equation}
\mathcal V / \mathcal E \cong  (\KK\cdot B) \oplus ((\KK/2\KK)\cdot N ).
\end{equation}
\end{lemma}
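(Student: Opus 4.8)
The plan is to analyze the quotient $\mathcal V/\mathcal E$ one connected component at a time, since both $\mathcal V$ and $\mathcal E$ decompose as direct sums over the connected components of $\Gamma$: if $\Gamma = \bigsqcup_\alpha \Gamma_\alpha$, then $\mathcal V = \bigoplus_\alpha \mathcal V_\alpha$ and $\mathcal E = \bigoplus_\alpha \mathcal E_\alpha$, where $\mathcal V_\alpha = \KK\cdot V(\Gamma_\alpha)$ and $\mathcal E_\alpha$ is generated by the $v+v'$ with $(v,v')$ an edge of $\Gamma_\alpha$. Hence $\mathcal V/\mathcal E \cong \bigoplus_\alpha \mathcal V_\alpha/\mathcal E_\alpha$, and it suffices to prove: for a connected graph $\Gamma_\alpha$ with a chosen vertex $v_0$, the map $\KK \to \mathcal V_\alpha/\mathcal E_\alpha$ sending $1 \mapsto v_0 + \mathcal E_\alpha$ is an isomorphism when $\Gamma_\alpha$ is bipartite, and induces an isomorphism $\KK/2\KK \xrightarrow{\cong} \mathcal V_\alpha/\mathcal E_\alpha$ when $\Gamma_\alpha$ is non-bipartite.

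First I would show surjectivity of $\KK \to \mathcal V_\alpha/\mathcal E_\alpha$ in both cases. For any vertex $v \in V(\Gamma_\alpha)$, connectedness gives a walk $v_0 = w_0, w_1, \dots, w_k = v$; since $w_{j} + w_{j+1} \in \mathcal E_\alpha$, we get $w_{j+1} \equiv -w_j \pmod{\mathcal E_\alpha}$ by induction, so $v \equiv (-1)^k v_0 \pmod{\mathcal E_\alpha}$. Thus every generator of $\mathcal V_\alpha$ is $\pm v_0$ modulo $\mathcal E_\alpha$, proving surjectivity. To pin down the kernel, I would construct an explicit $\KK$-linear map $\mathcal V_\alpha \to \KK$ (or $\to \KK/2\KK$) that kills $\mathcal E_\alpha$ and detects $v_0$. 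In the bipartite case, fix a bipartition $V(\Gamma_\alpha) = A \sqcup B$ with $v_0 \in A$, and define $\epsilon\colon \mathcal V_\alpha \to \KK$ by $\epsilon(v) = 1$ for $v\in A$ and $\epsilon(v) = -1$ for $v \in B$; since every edge joins $A$ to $B$, $\epsilon(v+v') = 1 + (-1) = 0$, so $\epsilon$ descends to $\mathcal V_\alpha/\mathcal E_\alpha \to \KK$ with $\epsilon(v_0 + \mathcal E_\alpha) = 1$. This is a retraction of $1 \mapsto v_0+\mathcal E_\alpha$, and combined with surjectivity it shows $\mathcal V_\alpha/\mathcal E_\alpha \cong \KK$, freely generated by $v_0 + \mathcal E_\alpha$. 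In the non-bipartite case, reduce everything modulo $2$: define $\bar\epsilon\colon \mathcal V_\alpha \to \KK/2\KK$ by $\bar\epsilon(v) = 1$ for every vertex $v$; then $\bar\epsilon(v+v') = 1 + 1 = 0$ in $\KK/2\KK$, so $\bar\epsilon$ descends to $\mathcal V_\alpha/\mathcal E_\alpha \to \KK/2\KK$. I also need that $2 v_0 \in \mathcal E_\alpha$: a non-bipartite connected graph contains an odd closed walk $v_0 = w_0, w_1, \dots, w_{2m+1} = v_0$, and from $w_{j+1} \equiv -w_j$ we get $v_0 \equiv (-1)^{2m+1} v_0 = -v_0$, i.e.\ $2v_0 \in \mathcal E_\alpha$. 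So the surjection $\KK \to \mathcal V_\alpha/\mathcal E_\alpha$ factors through $\KK/2\KK$, and $\bar\epsilon$ provides a section showing $\mathcal V_\alpha/\mathcal E_\alpha \cong \KK/2\KK$.

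Assembling the components, and noting $B$ and $N$ are complete sets of representatives of the bipartite and non-bipartite components respectively, gives $\mathcal V/\mathcal E \cong \bigoplus_{v_0 \in B}\KK \oplus \bigoplus_{v_0 \in N}\KK/2\KK = (\KK\cdot B)\oplus((\KK/2\KK)\cdot N)$, which is the claimed isomorphism, and it is visibly the one induced by the maps $B, N \to \mathcal V/\mathcal E$.

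The main obstacle is the identification of the kernel in the non-bipartite case: surjectivity and the decomposition over components are routine, but one must be careful that over a general commutative ring $\KK$ the relation $2v_0 \in \mathcal E_\alpha$ together with the $\KK/2\KK$-valued augmentation $\bar\epsilon$ really does force $\mathcal V_\alpha/\mathcal E_\alpha$ to be exactly $\KK/2\KK$ and not something larger — this is where the odd-cycle characterization of non-bipartiteness is essential and where the argument would fail if one only knew the component were connected. A minor point to check is well-definedness of $\epsilon$ and $\bar\epsilon$, i.e.\ that they are defined on the free module $\mathcal V_\alpha$ by their values on the basis of vertices and then shown to vanish on the generators of $\mathcal E_\alpha$, which is immediate.
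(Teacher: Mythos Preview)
Your proof is correct and follows essentially the same approach as the paper: reduce to connected components, then in each case build the augmentation map to $\KK$ or $\KK/2\KK$ (the paper calls it $f$, you call it $\epsilon$ or $\bar\epsilon$) and combine it with connectivity, plus the odd-cycle argument in the non-bipartite case, to identify the quotient. The only difference is packaging---you argue surjectivity of $\KK\to\mathcal V_\alpha/\mathcal E_\alpha$ and then exhibit a retraction, whereas the paper directly verifies $\mathcal E=\Ker(f)$---but the content is identical.
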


\begin{proof}
We present $\Gamma=\coprod_i \Gamma_i$ as a disjoint union of its connected components. For each $\Gamma_i$ we obtain a free module $\mathcal{V}_i$ generated by $V(\Gamma_i)$ and a submodule $\mathcal{E}_i\subseteq \mathcal{V}_i$ generated by the sums $v+v'$ where $(v,v')\in E(\Gamma_i).$ It is easy to see that 
\[
\mathcal{V}=\bigoplus_i \mathcal{V}_i, \hspace{5mm}  \mathcal{E}=\bigoplus_i \mathcal{E}_i, \hspace{5mm} \mathcal{V}/\mathcal{E}\cong \bigoplus_i \mathcal{V}_i/\mathcal{E}_i.
\]
Therefore, it suffices to prove the statement for a connected graph $\Gamma$.

Assume that $\Gamma$ is a connected bipartite graph. We present the set of its vertices as the disjoint union  $V(\Gamma)=U\sqcup W$ such that there are no edges between vertices of~$U,$ and there are no edges between vertices of $W.$ It is easy to see that in this case for any the module $\mathcal E$ is generated by elements of the form $u-u', w-w', u+w $ where~$u,u'\in U, w,w'\in W.$ Then we can consider a homomorphism $f: \mathcal{V}\to \KK$ sending all elements of~$U$ to $1,$ all elements of $W$ to $-1$ and prove that $\mathcal{E}=\Ker(f).$ Therefore $\mathcal{V}/\mathcal{E}\cong \KK.$

Assume that $\Gamma$ is connected non-bipartite graph. Then there is an odd cycle: a sequence of vertices $v_0,\dots,v_{2k+1}$ such that $(v_i,v_{i+1})\in E(\Gamma)$ and $v_{2k+1}=v_0.$ Summing up the elements $(-1)^i(v_i+v_{i+1})\in \mathcal{E},$ we obtain $2v_0\in \mathcal{E}.$ Since $\Gamma$ is connected, this implies that $2v\in \mathcal{E}$ for any vertex $v.$ Therefore $\mathcal{E}$ is generated by all elements of the form $2v$ and $v-v',$ where $(v,v')\in E(\Gamma).$ Consider a homomorphism~$f : \mathcal V\to \KK/2\KK$ sending $v$ to $1\in \KK/2\KK.$ It is easy to see that $\Ker(f)=\mathcal{E}.$ Therefore $\mathcal{V}/\mathcal{E}\cong \KK/2\KK.$
\end{proof}

We write the ideal $T \triangleleft \KK G$ as the sum of two ideals 
\begin{equation}
T=T_{\sf thick}+T_{\sf thin},
\end{equation}
where $T_{\sf thick}$ is generated by elements $t_{x,y}$ for thick pairs $(x,y),$ and $T_{\sf thin}$ is generated by elements $t_{x,y}$ for thin pairs $(x,y).$ 

\begin{lemma} \label{lemma:T-generators} \ 

\begin{enumerate}
    \item The $n$-th component $T_{\sf thick}^n$ is spanned by elements in the form $q+q',$ where $(q,q')$ is an edge of $\SS_n;$
    \item The $n$-th component $T_{\sf thin}^n$ is spanned by thin $n$-paths.
\end{enumerate}
\end{lemma}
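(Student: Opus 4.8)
The plan is to unwind the definition of the two-sided ideals $T_{\sf thick}$ and $T_{\sf thin}$ inside the path algebra $\KK G$. Since $\KK G$ is unital and spanned by paths, each of these ideals is spanned by the products $a\cdot t_{x,y}\cdot b$, where $a,b$ range over paths of $G$ (possibly of length zero) and $(x,y)$ over thick, respectively thin, pairs. I would first record the basic bookkeeping: such a product vanishes unless $a$ ends at $x$ and $b$ starts at $y$, and in that case it is a $\KK$-linear combination of $n$-paths with $n=|a|+|b|+2$, each obtained by inserting one of the $2$-paths from $x$ to $y$ so that its middle vertex sits at position $i:=|a|+1$; here $1\le i\le n-1$, which is exactly the range of short-move types. (When $n\le 1$ both $T^n$ and $E(\SS_n)$ are empty and the statements hold vacuously.)

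For part (2), I would note that when $(x,y)$ is thin the element $t_{x,y}=(x,v,y)$ is a single $2$-path, so a nonzero product $a\cdot t_{x,y}\cdot b$ is a single $n$-path $q$ with $q_{i-1}=x$, $q_{i+1}=y$, hence a thin path. Conversely, given a thin $n$-path $q$, I choose $0<i<n$ with $(q_{i-1},q_{i+1})$ thin; then $t_{q_{i-1},q_{i+1}}=(q_{i-1},q_i,q_{i+1})$, so $q=(q_0,\dots,q_{i-1})\cdot t_{q_{i-1},q_{i+1}}\cdot(q_{i+1},\dots,q_n)$ lies in $T_{\sf thin}^n$. Together these inclusions identify $T_{\sf thin}^n$ with the span of the thin $n$-paths.

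For part (1) I would run the parallel argument, now invoking the no-multisquares hypothesis. If $(x,y)$ is thick then $t_{x,y}=(x,v,y)+(x,v',y)$ is the sum of exactly two $2$-paths, so a nonzero product $a\cdot t_{x,y}\cdot b$ equals $q+q'$, where $q$ and $q'$ are the $n$-paths obtained by inserting $v$, respectively $v'$, at position $i=|a|+1$; they agree outside position $i$, are distinct, and satisfy $d(q_{i-1},q_{i+1})=d(x,y)=2$, so $(q,q')$ is an edge of $\SS_n$. Conversely, I take an edge $(q,q')$ of $\SS_n$, say of type $i$, so that $q_j=q'_j$ for $j\ne i$, $q_i\ne q'_i$, and $d(q_{i-1},q_{i+1})=2$. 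Then $(q_{i-1},q_i,q_{i+1})$ and $(q_{i-1},q'_i,q_{i+1})$ are two distinct $2$-paths from $q_{i-1}$ to $q_{i+1}$, and because $G$ has no multisquares they are the only ones; hence $(q_{i-1},q_{i+1})$ is thick with $t_{q_{i-1},q_{i+1}}=(q_{i-1},q_i,q_{i+1})+(q_{i-1},q'_i,q_{i+1})$, and so $q+q'=(q_0,\dots,q_{i-1})\cdot t_{q_{i-1},q_{i+1}}\cdot(q_{i+1},\dots,q_n)\in T_{\sf thick}^n$.

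Almost everything here is a direct translation of the definitions; the one genuine point is the last implication, where the no-multisquares hypothesis is precisely what upgrades the ``at least two $2$-paths'' condition implicit in a short move to the ``exactly two'' condition needed to identify $t_{q_{i-1},q_{i+1}}$ with the two-term sum $q+q'$. The only thing to watch is the matching between the insertion position $i=|a|+1$ (constrained by $|a|,|b|\ge 0$) and the admissible short-move types $1\le i\le n-1$; no serious obstacle is expected.
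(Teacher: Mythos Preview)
Your proof is correct and is exactly the unwinding of definitions the paper has in mind; the paper's own proof is simply the one-line remark that ``the assertions follow immediately from the definitions of thin and thick pairs.'' Your observation that the no-multisquares hypothesis is precisely what turns an $\SS_n$-edge into an element of $T_{\sf thick}$ is the only non-tautological point, and you have it right.
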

\begin{proof}
The assertions follow immediately from the definitions of thin and thick pairs.
\end{proof}

In what follows, we identify $\Omega^\bullet(G;\KK)$ and $\KK G/T.$

\begin{theorem}\label{thm basis of path cochain complex}
Let $\KK$ be a commutative ring, and $G$ be a digraph without multisquares.
Let $B$ be a complete set of representatives of thick bipartite $\SS_n$-classes, and $N$ be a complete set of representatives of thick non-bipartite $\SS_n$-classes. Then the obvious maps $\KK\cdot B,~ \KK\cdot N \to \Omega^n(G;\KK)$ induce an isomorphism of~$\KK$-modules
\begin{equation}
\Omega^n(G;\KK) \cong (\KK\cdot B) \oplus ((\KK/2\KK)\cdot N).
\end{equation}
Moreover, thin $\SS_n$-classes map to zero in $\Omega^n(G;\KK).$ 
\end{theorem}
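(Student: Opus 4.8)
The plan is to deduce Theorem \ref{thm basis of path cochain complex} from Lemma \ref{lemma:module_of_graph} applied to the graph $\Gamma = \SS_n(G)$, together with the description of $T^n$ provided by Lemma \ref{lemma:T-generators}. First I would recall that, by Theorem \ref{theorem:omega:description} and the identification $\Omega^\bullet(G;\KK) = \KK G/T$, the module $\Omega^n(G;\KK)$ is exactly $(\KK G)^n/T^n$. Now $(\KK G)^n$ is the free $\KK$-module on the set of all $n$-paths of $G$, which is precisely the vertex set $V(\SS_n)$; so in the notation of Lemma \ref{lemma:module_of_graph} we have $\mathcal V = (\KK G)^n$.

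The key step is to identify $T^n$ with an appropriate submodule so that the quotient splits according to the combinatorics of $\SS_n$. Using $T = T_{\sf thick} + T_{\sf thin}$, Lemma \ref{lemma:T-generators} tells me that $T_{\sf thick}^n$ is spanned by the elements $q+q'$ for edges $(q,q')$ of $\SS_n$ — that is, exactly the submodule $\mathcal E$ of Lemma \ref{lemma:module_of_graph} — while $T_{\sf thin}^n$ is spanned by the thin $n$-paths. So I would first quotient $\mathcal V$ by $\mathcal E = T_{\sf thick}^n$, apply Lemma \ref{lemma:module_of_graph} to get
\[
\mathcal V/\mathcal E \cong (\KK\cdot B') \oplus ((\KK/2\KK)\cdot N'),
\]
where $B'$ and $N'$ are complete sets of representatives of bipartite and non-bipartite connected components of $\SS_n$ (all of them, thin or thick). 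Then I would further quotient by the image of $T_{\sf thin}^n$, and I must show this image kills exactly the summands corresponding to thin $\SS_n$-classes.

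The main obstacle — really the only nontrivial point — is to verify that a thin path lies in $T_{\sf thick}^n + T_{\sf thin}^n$ in such a way that the thin $\SS_n$-classes die in the quotient and the thick ones are untouched. The relevant observation is that if $C$ is a thin $\SS_n$-class containing a thin path $p$, then $p$ itself is one of the generators of $T_{\sf thin}^n$; since every vertex of $C$ is connected to $p$ by a chain of short moves, repeatedly using the relations $q+q' \in T^n_{\sf thick}$ one sees that every element of $C$ becomes $\pm p \equiv 0$ modulo $T^n$, so the whole summand $\KK\cdot[C]$ (or $(\KK/2\KK)\cdot[C]$) of $\mathcal V/\mathcal E$ maps to zero in $\Omega^n(G;\KK)$; here it is worth noting that the representative map of Lemma \ref{lemma:module_of_graph} sends the class of a component to the class of any one of its vertices, up to sign in the bipartite case, so it is legitimate to test on $p$. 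Conversely, a generator $t_{x,y}$ for a thin pair is supported entirely within the thin $\SS_n$-classes it meets (a thin pair $(p_{i-1},p_{i+1})$ occurring in a path forces that path, hence its whole class, to be thin), so $T_{\sf thin}^n$ does not touch any thick summand. Combining, the image of $T^n$ in $\mathcal V/\mathcal E$ is precisely the span of the thin-class summands, and quotienting by it leaves
\[
\Omega^n(G;\KK) \cong (\KK\cdot B)\oplus((\KK/2\KK)\cdot N)
\]
with $B, N$ now the thick bipartite and thick non-bipartite representatives, and thin $\SS_n$-classes mapping to zero, as claimed. I would close by remarking that one must check the two quotient steps commute appropriately — i.e. that $T^n = \mathcal E + (T_{\sf thin}^n)$ as submodules of $\mathcal V$ — which is immediate from $T = T_{\sf thick}+T_{\sf thin}$ and Lemma \ref{lemma:T-generators}.
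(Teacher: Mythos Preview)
Your proposal is correct and follows essentially the same route as the paper: first quotient $(\KK G)^n$ by $T_{\sf thick}^n=\mathcal E$ and invoke Lemma~\ref{lemma:module_of_graph} for $\Gamma=\SS_n(G)$, then observe that the image of $T_{\sf thin}^n$ in $\mathcal V/\mathcal E$ is spanned precisely by the thin-class summands. Your write-up is in fact slightly more detailed than the paper's (which states the second step in one line), but the argument is identical.
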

\begin{proof} 
By Lemmas \ref{lemma:module_of_graph} and \ref{lemma:T-generators}\,(1), we obtain an isomorphism
\[(\KK G/T_{\sf thick})^n\cong (\KK\cdot B')\oplus \left((\KK/2\KK)\cdot N'\right),\]
where $B'$ is a complete set of representatives of bipartite $\SS_n$-classes and $N'$ is a complete set of representatives of non-bipartite $\SS_n$-classes. The image of the map
\[T^n_{\sf thin}\longrightarrow (\KK G/T_{\sf thick})^n\]
is spanned by representatives of thin $\SS_n$-classes, from which the assertion follows.
\end{proof}

We apply Theorem~\ref{thm basis of path cochain complex} to the case where $\KK=\FF$ is a field.

\begin{corollary}\label{corollary:basis:omega^n}\ 

\begin{enumerate}
    \item If ${\sf char}(\FF)=2,$ then a complete set of representatives of thick $\SS_n$-classes is a basis of  $\Omega^n(G;\FF).$
    \item If ${\sf char}(\FF)\neq 2,$ then a complete set of representatives of thick bipartite $\SS_n$-classes is a basis of $\Omega^n(G;\FF).$ 
\end{enumerate}
\end{corollary}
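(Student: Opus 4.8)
The plan is to obtain both statements as immediate specializations of Theorem~\ref{thm basis of path cochain complex} to the case $\KK = \FF$, the only extra input being the computation of $\FF/2\FF$. Recall that the theorem provides an isomorphism of $\KK$-modules
\[
\Omega^n(G;\KK) \cong (\KK\cdot B) \oplus ((\KK/2\KK)\cdot N),
\]
induced by the obvious maps on the free modules generated by the chosen sets of representatives $B$ (of thick bipartite $\SS_n$-classes) and $N$ (of thick non-bipartite $\SS_n$-classes), and moreover thin $\SS_n$-classes map to zero.

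First I would treat the case $\mathrm{char}(\FF) = 2$. Here $2 = 0$ in $\FF$, so $2\FF = 0$ and the quotient $\FF/2\FF$ is canonically identified with $\FF$. The theorem then yields $\Omega^n(G;\FF) \cong (\FF\cdot B)\oplus(\FF\cdot N)$, with the isomorphism induced by the obvious maps sending a representative path $q$ to $q + T^n$. Since the set $B \sqcup N$ is, by construction, a complete set of representatives of \emph{all} thick $\SS_n$-classes --- each thick class being either bipartite or non-bipartite --- this set maps bijectively onto a basis of $\Omega^n(G;\FF)$, which proves part~(1).

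Next, for $\mathrm{char}(\FF)\neq 2$, the element $2$ is invertible in $\FF$, hence $2\FF = \FF$ and $\FF/2\FF = 0$. The second summand in the theorem vanishes, so $\Omega^n(G;\FF) \cong \FF\cdot B$ via the obvious map, and a complete set of representatives of thick bipartite $\SS_n$-classes is a basis; this gives part~(2).

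I do not anticipate a real obstacle here: the content of the corollary is entirely carried by Theorem~\ref{thm basis of path cochain complex}, and the argument reduces to the elementary fact that $\FF/2\FF$ equals $\FF$ or $0$ according as $\mathrm{char}(\FF)$ is or is not $2$. The one point deserving a moment's care is that the maps in Theorem~\ref{thm basis of path cochain complex} are defined on the free modules on the representative \emph{paths} themselves, so that, tracing through Lemma~\ref{lemma:module_of_graph}, the images of these paths (rather than merely some abstract generators) are what constitute the basis --- ensuring the statement ``such-and-such set of paths is a basis of $\Omega^n(G;\FF)$'' is literally correct.
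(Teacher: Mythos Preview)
Your proposal is correct and matches the paper's intended approach: the corollary is stated without proof in the paper, as an immediate specialization of Theorem~\ref{thm basis of path cochain complex} obtained by observing that $\FF/2\FF$ equals $\FF$ or $0$ according to whether $\mathrm{char}(\FF)=2$. Your additional remark about tracing the maps so that the representative paths themselves form the basis is a nice clarification but not strictly needed.
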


\begin{corollary}\label{cor:dim:n<4}
For any two fields $\FF$ and~$\FF'$, we have
\begin{equation}
\dim_\FF(\Omega^n(G;\FF))= \dim_{\FF'}( \Omega^n(G;\FF')), \hspace{5mm} \text{ for }  \hspace{5mm} 0\leq n\leq 3.
\end{equation}
\end{corollary}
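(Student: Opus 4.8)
The plan is to derive this statement immediately from Corollary~\ref{corollary:basis:omega^n} together with Proposition~\ref{prop:S^3-bipartite}, so the proof will be very short. First I would invoke Proposition~\ref{prop:S^3-bipartite}: since $G$ has no multisquares (the standing assumption of this section), every $\SS_n$-class is bipartite for $n\leq 3$. Consequently, in the range $0\leq n\leq 3$ the thick $\SS_n$-classes and the thick bipartite $\SS_n$-classes form exactly the same collection of connected components of $\SS_n$.

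Next I would apply Corollary~\ref{corollary:basis:omega^n} in both of its cases. When ${\sf char}(\FF)=2$, a basis of $\Omega^n(G;\FF)$ is a complete set of representatives of thick $\SS_n$-classes; when ${\sf char}(\FF)\neq 2$, a basis is a complete set of representatives of thick bipartite $\SS_n$-classes. By the previous paragraph, for $n\leq 3$ these two sets have the same cardinality, namely the number $c_n$ of thick $\SS_n$-classes of $G$, a quantity that does not refer to the field at all. Hence $\dim_\FF(\Omega^n(G;\FF))=c_n$ for \emph{every} field $\FF$ and every $0\leq n\leq 3$, which in particular gives $\dim_\FF(\Omega^n(G;\FF))=\dim_{\FF'}(\Omega^n(G;\FF'))$.

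There is essentially no obstacle here: all of the substantive work has already been done in Corollary~\ref{corollary:basis:omega^n} and Proposition~\ref{prop:S^3-bipartite}. The only point worth emphasising is why the bound $n\leq 3$ is the natural one: it is precisely the range in which bipartiteness of $\SS_n$-classes is automatic, so that the two mutually exclusive cases of Corollary~\ref{corollary:basis:omega^n} yield identical dimension counts. Beyond this range, a thick non-bipartite $\SS_n$-class would be counted toward the dimension in characteristic $2$ but not in other characteristics (equivalently, it would contribute a $2$-torsion summand to $\Omega^n(G;\ZZ)$), and this is exactly the discrepancy that is exploited later for the digraph $\GG$.
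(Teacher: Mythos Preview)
Your proof is correct and uses the same key ingredients as the paper: Corollary~\ref{corollary:basis:omega^n} together with Proposition~\ref{prop:S^3-bipartite}. The only difference is cosmetic: the paper handles $n=0,1$ as ``obvious'' and $n=2$ via the classical count (triangles plus squares), reserving the bipartiteness argument for $n=3$, whereas you apply Proposition~\ref{prop:S^3-bipartite} uniformly across $0\leq n\leq 3$; your packaging is slightly cleaner and equally valid.
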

\begin{proof}
For $n=0,1$ it is obvious. For $n=2$, it is known that the dimension of~$\Omega^2(G;\FF)$ is the number of triangles plus the number of squares. For $n=3$ it follows from the fact that there are no non-bipartite short $\SS_3$-classes (Proposition~\ref{prop:S^3-bipartite}).
\end{proof}

\begin{corollary}\label{cor:not-equal}
Suppose that $G$ has a thick non-bipartite $\SS_n$-class. Then~$\Omega^n(G;\ZZ)$ has $2$-torsion and 
\[
   \dim_{\mathbb{Q}}(\Omega^n(G;\mathbb{Q})) < \dim_{\ZZ/2\ZZ}(\Omega^n(G;\ZZ/2\ZZ)). 
\]
\end{corollary}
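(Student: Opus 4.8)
The plan is to deduce everything directly from Theorem~\ref{thm basis of path cochain complex}, applied with the two coefficient rings $\KK = \ZZ$ and $\KK = \ZZ/2\ZZ$, together with the rationalisation statement of Proposition~\ref{prop:rationalisation}. Fix a thick non-bipartite $\SS_n$-class, say represented by a path $r$. By hypothesis such a class exists, so in the notation of Theorem~\ref{thm basis of path cochain complex} the set $N$ of representatives of thick non-bipartite $\SS_n$-classes is nonempty, while $B$ is the set of representatives of thick bipartite $\SS_n$-classes. Applying the theorem over $\KK = \ZZ$ gives
\[
\Omega^n(G;\ZZ) \cong (\ZZ \cdot B) \oplus \big((\ZZ/2\ZZ)\cdot N\big),
\]
and since $N \neq \varnothing$ the right-hand side contains a direct summand isomorphic to $\ZZ/2\ZZ$; hence $\Omega^n(G;\ZZ)$ has $2$-torsion. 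This settles the first assertion.

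For the dimension inequality, I would compute both sides via Corollary~\ref{corollary:basis:omega^n}. Over $\FF = \ZZ/2\ZZ$ (characteristic two), part~(1) says a complete set of representatives of \emph{all} thick $\SS_n$-classes is a basis, so $\dim_{\ZZ/2\ZZ}(\Omega^n(G;\ZZ/2\ZZ)) = |B| + |N|$. Over $\FF = \QQ$ (characteristic $\neq 2$), part~(2) says a complete set of representatives of thick \emph{bipartite} $\SS_n$-classes is a basis, so $\dim_{\QQ}(\Omega^n(G;\QQ)) = |B|$. Since $|N| \geq 1$ we get the strict inequality $\dim_{\QQ}(\Omega^n(G;\QQ)) = |B| < |B| + |N| = \dim_{\ZZ/2\ZZ}(\Omega^n(G;\ZZ/2\ZZ))$, as claimed. (Alternatively, the $\QQ$-dimension can be read off from $\Omega^n(G;\ZZ)\otimes\QQ$ using Proposition~\ref{prop:rationalisation}: tensoring $(\ZZ\cdot B)\oplus((\ZZ/2\ZZ)\cdot N)$ with $\QQ$ kills the torsion summand and leaves $\QQ\cdot B$, giving the same count — this is worth mentioning as a cross-check and because it ties the two halves of the corollary together conceptually.)

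There is essentially no obstacle here: the statement is a direct corollary of the structural Theorem~\ref{thm basis of path cochain complex}, and the only thing to be careful about is bookkeeping — making sure the roles of $B$ and $N$ (bipartite versus non-bipartite, and in all cases \emph{thick}) are used consistently across the three coefficient rings, and noting explicitly that finiteness of $G$ guarantees all these sets are finite so the dimensions in question are genuine natural numbers. I would write the proof in four or five lines, citing Theorem~\ref{thm basis of path cochain complex} for the torsion claim and Corollary~\ref{corollary:basis:omega^n} for the two dimension formulas, and close by observing that the hypothesis $N \neq \varnothing$ is exactly what forces both conclusions.
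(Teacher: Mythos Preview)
Your proposal is correct and is precisely the argument the paper has in mind: the corollary is stated without proof because it follows immediately from Theorem~\ref{thm basis of path cochain complex} (applied over $\ZZ$) for the torsion claim and from Corollary~\ref{corollary:basis:omega^n} for the two dimension counts, exactly as you outline. The mention of Proposition~\ref{prop:rationalisation} is a fine cross-check but not needed.
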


\smallskip

\subsection{The dual basis of \texorpdfstring{$\Omega_n$}{}}

Now we work over a field $\FF$. Again assume $G$ has no multisquares.
For each thick bipartite $\SS_n$-class $c,$ we choose a partition 
\[c=c^+ \sqcup c^-\]
such that there are no short moves between elements inside $c^+,$ and between elements inside $c^-.$ Then we set 
\begin{equation}
s_c =\sum_{p\in c^+} e_p - \sum_{p\in c^-} e_p. 
\end{equation}
If ${\sf char}(\FF)=2,$ for each thick $\SS_n$-class $c$ (not necessarily bipartite), we also set
\begin{equation}
s'_c = \sum_{p\in c} e_p.
\end{equation}

\begin{theorem}\label{theorem:dual-basis}
The following statements hold:
\begin{enumerate}
\item If ${\sf char}(\FF)=2,$ then the elements of the form $s'_c$ form a basis of $\Omega_n(G),$ where $c$ runs over all thick $\SS_n$-classes. 
\item If ${\sf char}(\FF)\neq 2,$ then the elements of the form $s_c$ form a basis of $\Omega_n(G),$ where $c$ runs over all thick bipartite $\SS_n$-classes.
\end{enumerate}
Moreover, these bases are dual to bases described in Corollary \ref{corollary:basis:omega^n} with respect to the non-degenerate bilinear form described in Corollary \ref{corollary:duality}.  
\end{theorem}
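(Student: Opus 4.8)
The plan is to deduce this from the orthogonality description of $\Omega_n(G)$ in Lemma~\ref{lemma:omega-orthogonal} together with the explicit basis of $\Omega^n(G;\FF)$ in Corollary~\ref{corollary:basis:omega^n}. Recall $\Omega_n(G)$ is the orthogonal complement of $T^n$ inside $\AA_n(G)$ with respect to the Kronecker pairing $\langle e_p, q\rangle = \delta_{p,q}$, and that by Corollary~\ref{corollary:duality} this induces a perfect pairing $\Omega_n(G)\times(\FF G/T)^n \to \FF$. Since $(\FF G/T)^n = \Omega^n(G;\FF)$ has the basis from Corollary~\ref{corollary:basis:omega^n}, it suffices to show that the proposed elements $s_c$ (resp.\ $s'_c$) lie in $\Omega_n(G)$ and form the dual basis to that basis; the ``moreover'' clause then gives everything at once, since a dual basis to a basis of $\Omega^n(G;\FF)^* \cong \Omega_n(G)$ is automatically a basis.

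First I would check that the proposed elements lie in $\Omega_n(G)$, i.e.\ are orthogonal to $T^n$. By Lemma~\ref{lemma:T-generators}, $T^n$ is spanned by the thin $n$-paths together with the sums $q+q'$ over edges $(q,q')$ of $\SS_n$. Orthogonality to thin paths is immediate since the $s_c$ and $s'_c$ are supported on thick $\SS_n$-classes, which by definition contain no thin paths. For an edge $(q,q')$ of $\SS_n$: if $q,q'$ lie in a thin class or in distinct classes from the one indexing our element, the pairing is zero; if they lie in the thick class $c$, then in the $\mathrm{char}\,\FF = 2$ case $\langle s'_c, q+q'\rangle = 1+1 = 0$, while in the $\mathrm{char}\,\FF \neq 2$ case the partition $c = c^+\sqcup c^-$ was chosen precisely so that each edge of $\SS_n$ joins $c^+$ to $c^-$ (such a partition exists exactly because $c$ is bipartite), hence $q,q'$ receive opposite signs in $s_c$ and $\langle s_c, q+q'\rangle = 0$. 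This shows $s_c, s'_c \in \Omega_n(G)$.

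Next I would verify the duality with the basis of Corollary~\ref{corollary:basis:omega^n}. In the $\mathrm{char}\,\FF\neq 2$ case, the basis of $\Omega^n(G;\FF)$ consists of (images of) chosen representatives $p_c$, one per thick bipartite $\SS_n$-class $c$. Pairing $s_c$ with the representative $p_{c'}$ of another class gives $0$ unless $c = c'$; and we may normalise the choice of representative $p_c$ to lie in $c^+$, so that $\langle s_c, p_c\rangle = 1$. Hence $\{s_c\}$ is dual to $\{p_c + T^n\}$, so in particular linearly independent, and its cardinality equals $\dim \Omega^n(G;\FF) = \dim \Omega_n(G)$, forcing it to be a basis. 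The $\mathrm{char}\,\FF = 2$ case is identical with $s'_c$ in place of $s_c$ and all thick classes in place of thick bipartite ones, using that over a field of characteristic two every class representative pairs to $1$ with $s'_c$ of its own class.

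The main obstacle is the well-definedness and existence of the partition $c = c^+\sqcup c^-$ used to define $s_c$, and making sure the resulting element is genuinely independent of that choice up to the freedom already present in choosing class representatives; this is where bipartiteness of thick $\SS_n$-classes is essential and is the one place the hypothesis ``no multisquares'' (via the structure of $\SS_n$, cf.\ Proposition~\ref{prop:S^3-bipartite} for the flavour) does real work. Everything else is bookkeeping with the Kronecker pairing, and a dimension count closes the argument once independence is established.
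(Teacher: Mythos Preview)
Your proposal is correct and follows essentially the same route as the paper: verify $s_c\in\Omega_n(G)$ via Lemma~\ref{lemma:omega-orthogonal} and the generators of $T^n$ from Lemma~\ref{lemma:T-generators}, then check the Kronecker pairing against the representatives from Corollary~\ref{corollary:basis:omega^n} to conclude duality. One small correction to your closing remarks: the existence of the bipartition $c=c^+\sqcup c^-$ is simply the definition of a bipartite class and needs no further justification; the ``no multisquares'' hypothesis is already baked into Lemma~\ref{lemma:T-generators} and Corollary~\ref{corollary:basis:omega^n}, not into this step.
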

\begin{proof}
Let us prove part (2); the proof of part~(1) is similar. First of all, we need to check that~$s_c\in \Omega_n(G)$ for any thick bipartite $\SS_n$-class $c$. By Lemmas~\ref{lemma:omega-orthogonal} and~\ref{lemma:T-generators}, it suffices to check that $s_c$ is orthogonal to  $p+q$ for $(p,q)\in E(\SS_n)$ and orthogonal to any thin $n$-path $p.$ If $p$ is a thin $n$-path, then $p\notin c,$ so $s_c$ is obviously orthogonal to $p.$ If $(p,q)\in E(\SS_n),$ then they are in the same $\SS_n$-class. 
If $p,q\notin c,$ again, it is obvious that $s_c$ is orthogonal to $p+q.$ Assume that $p,q\in c.$ Since $(p,q)\in E(\SS_n),$ we obtain either $p\in c^+$ and $q\in c^-,$ or $p\in c^-$ and $q\in c^+.$ Without lost of generality we assume that $p\in c^+$ and $q\in c^-.$ Then  we have
\[\begin{split}
\left\langle \sum_{r\in c^+}e_r, p \right\rangle =1,\quad & \left\langle \sum_{r\in c^-}e_r,p \right\rangle =0;\\
\left\langle \sum_{r\in c^-}e_r,q \right\rangle =1,\quad & \left\langle \sum_{r\in c^+}e_r,q \right\rangle =0.
\end{split}\]
Therefore $\langle s_c ,p+q \rangle=0.$

Consider a complete set $B\subseteq \Omega^n(G)$ of representatives of thick bipartite $\SS_n$-classes,  which is a basis of $\Omega^n(G)$ by Corollary \ref{corollary:basis:omega^n}. For any $b\in B$ we have
\[
\langle s_c,b \rangle=\begin{cases}
    1 & \text{ if } b\in c;\\
    0 & \text{ if } b\notin c.
\end{cases}
\]Therefore the elements $s_c$ form the dual basis of $B$ in $\Omega_n(G).$
\end{proof}

\begin{remark}
For $n=3$ our basis of $\Omega_3(G;\FF)$ coincides with the basis of trapezohedral paths and their merging images constructed by  Grigor’yan in \cite[Theorem~2.10]{grigor2022advances}. This follows from the proof of \cite[Theorem 2.10]{grigor2022advances}, in which the graph $P$ considered coincides with the connected component of $\SS_3(G).$
\end{remark}

\section{Nonexistence of spaces}

In this section we introduce the notion of the path Euler characteristic of a digraph and demonstrate its use in proving the nonexistence of spaces whose homology coincides with the path homology of certain digraphs with coefficients in different commutative rings simultaneously.  

\subsection{Euler characteristic}
We say that a space $X$ has homology of finite type if the integral homology group $H_n(X;\ZZ)$ is finitely generated for any $n.$ Similarly, a chain complex $C_\bullet$ of free abelian groups has homology of finite type if $H_n(C_\bullet)$ is finitely generated for any~$n.$ 

Given a chain complex $C_\bullet$ of free abelian groups and a field $\FF,$ we say that the Euler characteristic $\chi^\FF(C_\bullet)$ of $C_\bullet$ over $\FF$ is defined, if~$H_n(C_\bullet\otimes_\ZZ \FF)$ is finitely dimensional for all $n,$ and equals to zero for all but a finite number of $n$'s. In this case, we set 
\begin{equation}
\chi^\FF(C_\bullet) = \sum_{n}(-1)^n \dim_\FF( H_n(C_\bullet\otimes_\ZZ \FF)).
\end{equation}

The following lemma seems to be known but we add it here for completeness. 
\begin{lemma}\label{lemma:complexes}
Let $C_\bullet$ be a chain complex of free abelian groups with homology of finite type and $p$ be a prime. 
Suppose $\chi^\QQ(C_\bullet)$ and $\chi^{\ZZ/p\ZZ}(C_\bullet)$ are defined. Then 
\[
\chi^\QQ(C_\bullet) = \chi^{\ZZ/p\ZZ}(C_\bullet).
\]
\end{lemma}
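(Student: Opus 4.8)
The plan is to reduce the claim to a statement purely about finitely generated abelian groups by invoking the universal coefficient theorem. First I would recall that since $C_\bullet$ is a chain complex of free abelian groups with homology of finite type, each $H_n(C_\bullet)$ is a finitely generated abelian group, hence decomposes as $\mathbb{Z}^{r_n} \oplus \mathrm{tors}_n$ where $\mathrm{tors}_n$ is a finite group. The hypothesis that $\chi^{\mathbb{Q}}(C_\bullet)$ and $\chi^{\mathbb{Z}/p\mathbb{Z}}(C_\bullet)$ are defined forces $H_n(C_\bullet) = 0$ for all but finitely many $n$: indeed $H_n(C_\bullet \otimes \mathbb{Q}) = H_n(C_\bullet) \otimes \mathbb{Q} = \mathbb{Q}^{r_n}$ vanishes for all but finitely many $n$, so $r_n = 0$ outside a finite set, and from the universal coefficient sequence $\dim_{\mathbb{Z}/p} H_n(C_\bullet \otimes \mathbb{Z}/p\mathbb{Z}) = r_n + d_n + d_{n-1}$ where $d_n = \dim_{\mathbb{Z}/p}(\mathrm{tors}_n \otimes \mathbb{Z}/p\mathbb{Z})$; since this is $0$ for all but finitely many $n$, all but finitely many $d_n$ vanish too. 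Hence only finitely many $H_n$ are nonzero, and all Euler-characteristic sums below are finite.

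Next I would use the universal coefficient theorem in the two relevant forms. Over $\mathbb{Q}$ (flat), $H_n(C_\bullet \otimes_{\mathbb{Z}} \mathbb{Q}) \cong H_n(C_\bullet) \otimes_{\mathbb{Z}} \mathbb{Q}$, so $\dim_{\mathbb{Q}} H_n(C_\bullet \otimes \mathbb{Q}) = r_n$, the free rank of $H_n(C_\bullet)$. Over $\mathbb{Z}/p\mathbb{Z}$, the short exact sequence
\[
0 \to H_n(C_\bullet) \otimes_{\mathbb{Z}} \mathbb{Z}/p\mathbb{Z} \to H_n(C_\bullet \otimes_{\mathbb{Z}} \mathbb{Z}/p\mathbb{Z}) \to \mathrm{Tor}^{\mathbb{Z}}_1(H_{n-1}(C_\bullet), \mathbb{Z}/p\mathbb{Z}) \to 0
\]
gives $\dim_{\mathbb{Z}/p} H_n(C_\bullet \otimes \mathbb{Z}/p\mathbb{Z}) = \dim_{\mathbb{Z}/p}(H_n(C_\bullet) \otimes \mathbb{Z}/p\mathbb{Z}) + \dim_{\mathbb{Z}/p} \mathrm{Tor}^{\mathbb{Z}}_1(H_{n-1}(C_\bullet), \mathbb{Z}/p\mathbb{Z})$. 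Writing $a_n = \dim_{\mathbb{Z}/p}(H_n(C_\bullet) \otimes \mathbb{Z}/p\mathbb{Z})$ and $b_n = \dim_{\mathbb{Z}/p} \mathrm{Tor}^{\mathbb{Z}}_1(H_n(C_\bullet), \mathbb{Z}/p\mathbb{Z})$, I would then compute
\[
\chi^{\mathbb{Z}/p\mathbb{Z}}(C_\bullet) = \sum_n (-1)^n (a_n + b_{n-1}) = \sum_n (-1)^n a_n - \sum_n (-1)^n b_n,
\]
using the index shift $\sum_n (-1)^n b_{n-1} = -\sum_n (-1)^n b_n$, all sums being finite.

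Finally I would observe that for a finitely generated abelian group $A$ of free rank $r$, one has $\dim_{\mathbb{Z}/p}(A \otimes \mathbb{Z}/p\mathbb{Z}) = \dim_{\mathbb{Z}/p} \mathrm{Tor}^{\mathbb{Z}}_1(A, \mathbb{Z}/p\mathbb{Z}) + r$: both $A \otimes \mathbb{Z}/p\mathbb{Z}$ and $\mathrm{Tor}_1(A,\mathbb{Z}/p\mathbb{Z})$ see only the $p$-primary part of the torsion, and if that part is $\bigoplus_j \mathbb{Z}/p^{k_j}\mathbb{Z}$ then each summand contributes $1$ to each dimension, while the free part $\mathbb{Z}^r$ contributes $r$ to $A \otimes \mathbb{Z}/p\mathbb{Z}$ and $0$ to $\mathrm{Tor}_1$. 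Applying this to $A = H_n(C_\bullet)$ gives $a_n - b_n = r_n$, whence $\sum_n (-1)^n a_n - \sum_n (-1)^n b_n = \sum_n (-1)^n r_n = \chi^{\mathbb{Q}}(C_\bullet)$, completing the argument. I do not expect any serious obstacle here; the only point requiring a little care is the bookkeeping that establishes all the relevant sums are finite, which is why I would dispatch that first.
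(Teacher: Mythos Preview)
Your argument is correct and follows essentially the same route as the paper: decompose each $H_n(C_\bullet)$ via the structure theorem, apply the universal coefficient theorem over $\QQ$ and over $\ZZ/p\ZZ$, and observe that the $p$-torsion contributions telescope in the alternating sum. One small overstatement: from $r_n=0$ and $d_n=0$ for almost all $n$ you cannot conclude that only finitely many $H_n$ are nonzero (torsion prime to $p$ need not vanish), but this is harmless since only $r_n$, $a_n$, $b_n$ enter your sums and those do vanish outside a finite set.
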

\begin{proof}
Since $C_\bullet$ has homology of finite type, we obtain \[H_n(C_\bullet)\cong \ZZ^{a_n} \oplus (\ZZ/p\ZZ)^{b_n} \oplus A_n,\]
where $A_n$ is a finite abelian group that does not have $p$-torsion. By the universal coefficient theorem for chain complexes \cite[Th.3.6.2]{weibel1995introduction}, we have
\[
H_n(C_\bullet\otimes_\ZZ \QQ)\cong \QQ^{a_n} \text{ and }
H_n(C_\bullet\otimes_\ZZ \ZZ/p\ZZ)\cong (\ZZ/p\ZZ)^{a_n+b_n+b_{n-1}}.    
\]

The hypothesis that $\chi^\QQ(C_\bullet)$ and $\chi^{\ZZ/p}(C_\bullet)$ are defined implies that $a_n$ and $b_n$ are zero for all but a finite number of $n$'s. Therefore, we have
\[\chi^\QQ(C_\bullet)=\sum_n(-1)^n a_n = \sum_n (-1)^n (a_n+b_n+b_{n-1}) = \chi^{\ZZ/p\ZZ}(C_\bullet).\qedhere
\]
\end{proof}

In the next example, we see that the condition of having homology of finite type is sufficient for the statement of Lemma~\ref{lemma:complexes}.
\begin{example}
Let $C_\bullet$ be a free resolution of the abelian group $\QQ/\ZZ$ (a chain complex of free abelian groups such that $C_n=0$ for $n<0$, whose homology is concentrated in degree $0,$ and isomorphic to $\QQ/\ZZ$).  
Then~$\chi^\QQ(C_\bullet)$ and~$\chi^{\ZZ/p\ZZ}(C_\bullet)$ are defined but
\begin{equation}
   0=\chi^\QQ(C_\bullet) \neq \chi^{\ZZ/p\ZZ}(C_\bullet)=-1. 
\end{equation}
This is because the homology of $C_\bullet\otimes_\ZZ \QQ$ is trivial, and the homology of  $C_\bullet\otimes_\ZZ \ZZ/p\ZZ$ is concentrated in degree one: $H_1(C_\bullet\otimes_\ZZ \ZZ/p\ZZ) \cong \Tor(\QQ/\ZZ,\ZZ/p\ZZ) \cong \ZZ/p\ZZ.$ 
\end{example}

\subsection{Path Euler characteristic}

Let $\FF$ be a field. The path Euler characteristic~$\chi^\FF(G)$ of a finite digraph $G$ (may contain multisquares) is defined, if the path homology groups~$\PH_n(G;\FF)$ are trivial for all but a finite number of~$n$'s. 
In this case, we set
\begin{equation}
\chi^{\FF}(G) = \sum_{n} (-1)^n \dim_{\FF}(\PH_n(G;\FF)).
\end{equation}
If there exists $m$ such that  $\Omega_m(G;\FF)=0,$ then  $\Omega_n(G;\FF)=0$ for $n\geq m.$ In this case,
\begin{equation}
\chi^{\FF}(G) = \sum_{n=0}^{m-1} (-1)^n \dim_{\FF}(\Omega_n(G;\FF)).
\end{equation}

In the case that $G$ has no multisquares, we apply Corollary~\ref{corollary:basis:omega^n} to compute the path Euler characteristics. Denote by $N_n$ the number of thick bipartite  $\SS_n$-classes and by $N'_n$ the number of thick $\SS_n$-classes. Then we have 
\begin{equation}
\chi^\FF(G) 
= 
\begin{cases}
\sum_n (-1)^n N_n, & \text{ if } {\sf char}(\FF)\neq 2\ \text{ and }\   N_m=0 \text{ for some } m; \\
\sum_n (-1)^n N'_n, &  \text{ if } {\sf char}(\FF)= 2\ \text{ and }\   N'_m=0 \text{ for some } m.
\end{cases}
\end{equation}

\subsection{Path homology and nonexistence of spaces}

\begin{proposition}\label{prop:general:non-existance}
Let $G$ be a finite digraph and $p$ be a prime such that the path Euler characteristics $\chi^\QQ(G)$ and $\chi^{\ZZ/p\ZZ}(G)$ are defined. Suppose $\chi^\QQ(G) \neq \chi^{\ZZ/p\ZZ}(G)$.
Then the following statements hold:
\begin{enumerate}

\item 
There is no space $X$ such that there are isomorphisms of graded abelian groups 
\[\text{$\PH_*(G;\KK)\cong H_*(X;\KK)$ \ \  for \ \  $\KK=\ZZ$ \ \  and \ \ $\KK=\ZZ/p\ZZ$.}
\]

\item There is no space $X$ with homology of finite type such that there are isomorphisms of vector spaces
\[
\text{$\PH_*(G;\FF)\cong H_*(X;\FF)$ \ \  for \ \  $\FF=\QQ$ \ and \ $\FF=\ZZ/p\ZZ$.}
\]

\item There is no chain complex $C_\bullet$ of free abelian groups with homology of finite type such that 
\[\text{$\PH_*(G;\FF) \cong H_*(C_\bullet \otimes_\ZZ \FF)$ \ \  for \ \ $\FF=\QQ$ \ and \ $\FF=\ZZ/p\ZZ$.}\]

\item There exists $n$ such that there is no ``universal coefficient'' short exact sequence 
\[ 0 \to \PH_n(G;\ZZ)\otimes \ZZ/p\ZZ \to  \PH_n(G;\ZZ/p\ZZ) \to  \Tor(\PH_{n-1}(G;\ZZ),\ZZ/p\ZZ) \to 0.\]
 \end{enumerate}
\end{proposition}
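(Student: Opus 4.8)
The plan is to deduce all four statements from Lemma~\ref{lemma:complexes}, with~(3) as the core case, (1) and~(2) as formal consequences of it, and~(4) handled by a short separate dimension count. First I would record the preliminary observations. Since $G$ is finite, $\AA_n(G;\ZZ)$ is free of finite rank, so its subgroup $\Omega_n(G;\ZZ)$ is finitely generated free; hence $\Omega_\bullet(G;\ZZ)$ is a chain complex of finitely generated free abelian groups and every $\PH_n(G;\ZZ)$ is finitely generated, and likewise $\PH_n(G;\FF)$ is finite-dimensional for any field $\FF$. Recall also that ``$\chi^\FF(G)$ is defined'' means precisely that $\PH_n(G;\FF)=0$ for all but finitely many $n$.

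For~(3), suppose $C_\bullet$ is a chain complex of free abelian groups with homology of finite type and $\PH_*(G;\FF)\cong H_*(C_\bullet\otimes_\ZZ\FF)$ for $\FF=\QQ$ and $\FF=\ZZ/p\ZZ$. Then $H_n(C_\bullet\otimes_\ZZ\FF)\cong\PH_n(G;\FF)$ is finite-dimensional for all $n$ and vanishes for $n\gg 0$, so $\chi^\FF(C_\bullet)$ is defined and equals $\chi^\FF(G)$ for both fields; Lemma~\ref{lemma:complexes} then gives $\chi^\QQ(G)=\chi^\QQ(C_\bullet)=\chi^{\ZZ/p\ZZ}(C_\bullet)=\chi^{\ZZ/p\ZZ}(G)$, contradicting the hypothesis. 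Part~(2) follows by applying~(3) to the singular chain complex $S_\bullet(X)$: it consists of free abelian groups, it has homology of finite type whenever $X$ does, and $H_n(S_\bullet(X)\otimes_\ZZ\FF)\cong H_n(X;\FF)$, so a space $X$ contradicting~(2) would produce a $C_\bullet$ contradicting~(3). Part~(1) then follows from~(2): given $X$ with $\PH_*(G;\KK)\cong H_*(X;\KK)$ as graded abelian groups for $\KK=\ZZ,\ZZ/p\ZZ$, we get $H_n(X;\ZZ)\cong\PH_n(G;\ZZ)$ finitely generated, so $X$ has homology of finite type; tensoring the integral isomorphism with $\QQ$ and using $\PH_*(G;\ZZ)\otimes\QQ\cong\PH_*(G;\QQ)$ (Proposition~\ref{prop:rationalisation}) gives $H_*(X;\QQ)\cong\PH_*(G;\QQ)$ as graded vector spaces, while the mod-$p$ isomorphism is automatically one of graded $\ZZ/p\ZZ$-vector spaces; thus $X$ contradicts~(2).

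For~(4), I would argue by contradiction, assuming the displayed short exact sequence exists for every $n$. Set $a_n=\operatorname{rank}\PH_n(G;\ZZ)=\dim_\QQ\PH_n(G;\QQ)$ and $t_n=\dim_{\ZZ/p\ZZ}\Tor(\PH_n(G;\ZZ),\ZZ/p\ZZ)$; since $\PH_n(G;\ZZ)$ is finitely generated one has $\dim_{\ZZ/p\ZZ}(\PH_n(G;\ZZ)\otimes\ZZ/p\ZZ)=a_n+t_n$, so additivity of dimension along the sequences yields $\dim_{\ZZ/p\ZZ}\PH_n(G;\ZZ/p\ZZ)=a_n+t_n+t_{n-1}$. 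Because $\chi^\QQ(G)$ is defined we have $a_n=0$ for $n\gg 0$, and because $\chi^{\ZZ/p\ZZ}(G)$ is defined with all three summands nonnegative we get $t_n=0$ for $n\gg 0$; hence all sums below are finite and telescope:
\[
\chi^{\ZZ/p\ZZ}(G)=\sum_n(-1)^n\bigl(a_n+t_n+t_{n-1}\bigr)=\sum_n(-1)^na_n=\chi^\QQ(G),
\]
again contradicting the hypothesis. The only delicate points are the bookkeeping about when the various Euler characteristics are defined and the automatic finiteness assertions in the reductions $(3)\Rightarrow(2)\Rightarrow(1)$; I do not expect a real obstacle, since the substantive content is already packaged in Lemma~\ref{lemma:complexes}.
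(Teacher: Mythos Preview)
Your proposal is correct and follows essentially the same route as the paper: part~(3) via Lemma~\ref{lemma:complexes}, part~(2) by specialising to the singular chain complex, part~(1) by combining the integral isomorphism with Proposition~\ref{prop:rationalisation} to reduce to~(2), and part~(4) by the same dimension-count-and-telescope argument. Your formulation of~(4) in terms of $t_n=\dim_{\ZZ/p\ZZ}\Tor(\PH_n(G;\ZZ),\ZZ/p\ZZ)$ is in fact slightly cleaner than the paper's, which writes the $p$-primary part as $(\ZZ/p\ZZ)^{b_n}$ and so tacitly identifies higher $p$-power summands with $\ZZ/p\ZZ$ for the purposes of the count.
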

\begin{proof}

We first show part (3), while part (2) follows from (3). Assume the contrary, that $C_\bullet$ is a chain complex of free abelian groups of finite homology type such that 
\[\text{$H_n(C_\bullet\otimes_\ZZ \FF)\cong \PH_\bullet(G;\FF)$ for~$\FF=\QQ$ and~$\FF=\ZZ/p\ZZ.$}\]
Then $\chi^\QQ(C_\bullet)$ and $\chi^{\ZZ/p\ZZ}(C_\bullet)$ are defined. Lemma~\ref{lemma:complexes} implies $\chi^\QQ(C_\bullet)=\chi^{\ZZ/p\ZZ}(C_\bullet)$,
so $\chi^\QQ(G)= \chi^{\ZZ/p\ZZ}(G)$ by assumption. 
This makes a contradiction. 

Next we prove part (1). Suppose there is a space $X$ such that 
\[\PH_*(G;\ZZ)\cong H_*(X;\ZZ).\]
As $G$ is finite, the space $X$ has finite homology type. By Proposition \ref{prop:rationalisation}, we have 
\[\PH_*(G;\QQ) \cong  \PH_*(G;\ZZ) \otimes \QQ \cong H_*(X;\ZZ) \otimes \QQ \cong H_*(X;\QQ).\]
This contradicts to part (2).

Finally, we prove part (4). 
We follow an argument similar to the proof of Lemma~\ref{lemma:complexes}. 
Assume the contrary. 
Since $G$ is finite, we have 
\[\PH_n(G;\ZZ)\cong \ZZ^{a_n} \oplus (\ZZ/p\ZZ)^{b_n} \oplus A_n,\] 
where $A_n$ is a finite abelian group without $p$-torsion. By Proposition \ref{prop:rationalisation}, we have~$\PH_n(G;\QQ)\cong \QQ^{a_n}.$
By assumption, we have 
$\PH_n(G;\ZZ/p\ZZ)\cong (\ZZ/p\ZZ)^{a_n+b_n+b_{n-1}}.$ 
The fact that $\chi^\QQ(G)$ and $\chi^{\ZZ/p\ZZ}(G)$ are defined implies that $a_n$ and $b_n$ are zero for all but a finite number of $n$'s. Therefore 
\[\chi^\QQ(G)=\sum_n(-1)^n a_n = \sum_n (-1)^n (a_n+b_n+b_{n-1}) = \chi^{\ZZ/p\ZZ}(G),\]
which makes a contradiction.  
\end{proof}

Next we will give an explicit example that satisfies the assumption of Proposition~\ref{prop:general:non-existance}.

\subsection{The main example} \label{sec main example}

Let us first construct a auxiliary example of a digraph~$\GG',$ and later we will construct the main example $\GG,$ which  will be a modification of $\GG'.$ 

The digraph $\mathcal{G}'$ has vertices $x_0,x_4$ and $x^i_j,$ where $i\in \ZZ/3\ZZ$ and $1\leq j\leq 3$, with arrows:
\[x_0\to x^i_1,~ x^i_3\to x_4,~ x^i_j\to x^i_{j+1}  \text{ and } x^i_j \to x^{i+1}_{j+1} \quad
\text{for } j=1,2 \text{ and } i\in \ZZ/3\ZZ.\]
We picture it below. 
\begin{equation} 
\mathcal{G}': \hspace{5mm}
\begin{tikzcd}
  &  x_1^0 \ar[r] \ar[rd] & x_2^0 \ar[rd] \ar[r] & x_3^0 \ar[dr] &   \\
x_0\ar[r] \ar[ru] \ar[rd] & x_1^1 \ar[r] \ar[rd] & x_2^1 \ar[r] \ar[rd] & x_3^1\ar[r] & x_4 \\
  & x_1^2\ar[r] \ar[ruu] & x_2^2\ar[r]\ar[ruu] & x_3^2 \ar[ur]  &   
\end{tikzcd}    
\end{equation}

\

A $4$-path in $\GG'$ has the form $(x_0,x_1^{i_1},x_2^{i_2},x_3^{i_3},x_4),$ where $i_{k+1}\in \{i_{k},i_{k}+1\}.$ Therefore, there are $3\cdot 2\cdot 2 =12$ paths of length $4$. Let us denote them by $p_{i_1i_2i_3}=(x_0,x_1^{i_1},x_2^{i_2},x_3^{i_3},x_4).$ Then the digraph $\SS_4$ can be drawn as follows.

\begin{equation}\label{eq:S^4}
\begin{tikzpicture}[baseline=(current  bounding  box.center)]
\draw[line width=1pt]
({2*cos(0*360/3-90)},{2*sin(0*360/3-90)})  node (a1) {\underline{$p_{000}$}}

({2*cos(1*360/3-90)},{2*sin(1*360/3-90)})  node (a2) {\underline{$p_{111}$}}

({2*cos(2*360/3-90)},{2*sin(2*360/3-90)})  node (a3) {\underline{$p_{222}$}}

({4*cos(0*360/3-90)},{4*sin(0*360/3-90)})  node (b1) {\underline{$p_{201}$}}

({4*cos(1*360/3-90)},{4*sin(1*360/3-90)})  node (b2) {\underline{$p_{012}$}}

({4*cos(2*360/3-90)},{4*sin(2*360/3-90)})  node (b3) {$\underline{p_{120}}$}

({3*cos(0*360/3-90+20)},{3*sin(0*360/3-90+20)})  node (c1) {$p_{001}$}

({3*cos(1*360/3-90+20)},{3*sin(1*360/3-90+20)})  node (c2) {$p_{112}$}

({3*cos(2*360/3-90+20)},{3*sin(2*360/3-90+20)})  node (c3) {$p_{220}$}

({3*cos(0*360/3-90-20)},{3*sin(0*360/3-90-20)})  node (d1) {$p_{200}$}

({3*cos(1*360/3-90-20)},{3*sin(1*360/3-90-20)})  node (d2) {$p_{011}$}

({3*cos(2*360/3-90-20)},{3*sin(2*360/3-90-20)})  node (d3) {$p_{220}$}
;
\path[-]
(a1) edge  node[above right]{3}  (c1)
(c1) edge  node[below right]{1} (b1)
(b1) edge  node[below left]{3} (d1)
(d1) edge  node[above left]{1} (a1)
(a2) edge  node[left]{3} (c2)
(c2) edge  node[above]{1} (b2)
(b2) edge  node[right]{3} (d2)
(d2) edge  node[below]{1} (a2)
(a3) edge  node[below]{3} (c3)
(c3) edge  node[left]{1} (b3)
(b3) edge  node[above]{3} (d3)
(d3) edge  node[right]{1} (a3)
(c1) edge  node[right]{2} (d2)
(c2) edge  node[above]{2} (d3)
(c3) edge  node[left]{2} (d1)
;
\end{tikzpicture}
\end{equation} 
Note that this graph contains a $9$-cycle, and hence, the $\SS_4$-class is non-bipartite. 

It is easy to check that the only thin pairs of $\GG'$ are these six pairs 
\begin{equation}
(x_1^i,x_3^i), \hspace{5mm} (x_1^i,x_3^{i+2})\ \text{ for } \  i\in \ZZ/3\ZZ.   
\end{equation}
Therefore the underlined paths on the picture are the thin $4$-paths in $\GG'.$ So the~$\SS_4$-class of $\GG'$ is thin and vanishes in $\Omega^4(\GG',\ZZ/2\ZZ).$ Further we will add more arrows to the digraph so that the corresponding $\SS_4$-class will be thick.

The digraph $\GG$ is obtained from $\GG'$ by adding six additional arrows connecting thin pairs of $\GG',$ namely the pairs $(x_1^i,x_3^i)$ and $(x_1^i,x_3^{i+2})$  for $i\in \ZZ/3.$ 

\begin{equation} \label{eq main example}
\GG: \hspace{5mm}
\begin{tikzcd}
  &  x_1^0 \ar[r] \ar[rr,
bend left = 10mm] \ar[rrdd,
bend right = 10mm] \ar[rd] & x_2^0 \ar[rd] \ar[r] & x_3^0 \ar[dr] &   \\
x_0\ar[r] \ar[ru] \ar[rd] & 
x_1^1 \ar[r] \ar[rd] \ar[rr,
bend left = 10mm] \ar[rru,
bend left =2mm]
& x_2^1 \ar[r] \ar[rd] & x_3^1\ar[r] & x_4 \\
  & x_1^2\ar[r] \ar[ruu] \ar[rr,
bend right=10mm] \ar[rru,
bend left=2mm] & x_2^2\ar[r]\ar[ruu] & x_3^2 \ar[ur]  &
\end{tikzcd}    
\end{equation}

\medskip

Note that $\SS_4(\GG)=\SS_4(\GG').$ However $(x_1^i,x_3^i)$ and $(x_1^i,x_3^{i+2})$ are not thin pairs of~$\GG,$ because $d(x_1^i,x_3^i)=d(x_1^i,x_3^{i+2})=1$ in $\GG.$

\begin{theorem}\label{theorem:example}
For the digraph $\GG$ defined in~\eqref{eq main example}, the following statements hold:
\begin{enumerate}
\item The rational and mod-$2$ path Euler characteristics of $\GG$ are not equal 
\[ \chi^\QQ(\GG) \neq \chi^{\ZZ/2\ZZ}(\GG).\]

\item $\Omega^4(\GG;\ZZ)$ has $2$-torsion.




\end{enumerate} 
\end{theorem}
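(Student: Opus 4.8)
The plan is to prove both parts of Theorem~\ref{theorem:example} by a direct computation of the relevant path (co)chain groups of $\GG$, using the structural results of Section~\ref{section basis}. The key point already established in the text is that $\GG$ has no multisquares, that $\SS_4(\GG)=\SS_4(\GG')$ consists of a single $\SS_4$-class, and that this class is \emph{thick} (after the six arrows are added, the paths $p_{000},p_{111},p_{222},p_{201},p_{012},p_{120},\dots$ are no longer thin, since $d(x_1^i,x_3^i)=d(x_1^i,x_3^{i+2})=1$ in $\GG$) and \emph{non-bipartite} (it contains a $9$-cycle). Granting this, part~(2) is essentially immediate: by Corollary~\ref{cor:not-equal}, the existence of a thick non-bipartite $\SS_4$-class forces $\Omega^4(\GG;\ZZ)$ to have $2$-torsion (and simultaneously $\dim_\QQ \Omega^4(\GG;\QQ)<\dim_{\ZZ/2\ZZ}\Omega^4(\GG;\ZZ/2\ZZ)$). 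So the first thing I would do is carefully verify the no-multisquares condition for $\GG$ and re-examine which pairs are thick/thin in $\GG$ versus $\GG'$, then cite Corollary~\ref{cor:not-equal}.

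For part~(1), the strategy is to check that $\Omega_\bullet(\GG;\FF)$ is a bounded complex — concretely, that $\Omega_n(\GG;\FF)=0$ for $n\ge 5$ — so that $\chi^\FF(\GG)$ equals the alternating sum $\sum_{n=0}^{4}(-1)^n\dim_\FF\Omega_n(\GG;\FF)$, and then to compare these sums for $\FF=\QQ$ and $\FF=\ZZ/2\ZZ$. The vanishing in degrees $\ge 5$ should follow because $\GG$ has no paths of length $\ge 5$ whose $\SS_n$-classes survive — in fact there are no $5$-paths at all in $\GG$ since every maximal path $x_0\to x_1^i\to\dots\to x_3^j\to x_4$ has length $4$ (one should double-check that the new shortcut arrows do not create longer directed paths), so $\RR_n(\GG)=0$ hence $\Omega_n(\GG;\FF)=0$ for $n\ge 5$. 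Then by Corollary~\ref{cor:dim:n<4} the dimensions of $\Omega^n(\GG;\FF)$ — equivalently of $\Omega_n(\GG;\FF)$ by the duality of Corollary~\ref{corollary:duality} — agree for $\FF=\QQ$ and $\FF=\ZZ/2\ZZ$ when $0\le n\le 3$. The only discrepancy is in degree $4$: by Corollary~\ref{corollary:basis:omega^n}, $\dim_{\ZZ/2\ZZ}\Omega^4(\GG;\ZZ/2\ZZ)$ counts all thick $\SS_4$-classes (here: $1$), while $\dim_\QQ\Omega^4(\GG;\QQ)$ counts only thick \emph{bipartite} $\SS_4$-classes (here: $0$, since the unique class is non-bipartite). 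Hence $\dim_\QQ\Omega_4(\GG;\QQ)=0$ but $\dim_{\ZZ/2\ZZ}\Omega_4(\GG;\ZZ/2\ZZ)=1$, and therefore
\[
\chi^{\ZZ/2\ZZ}(\GG)-\chi^\QQ(\GG)=(-1)^4\bigl(\dim_{\ZZ/2\ZZ}\Omega_4(\GG;\ZZ/2\ZZ)-\dim_\QQ\Omega_4(\GG;\QQ)\bigr)=1\neq 0.
\]

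The main obstacle I anticipate is the bookkeeping needed to confirm that the $\SS_4$-class of $\GG$ really is thick and non-bipartite \emph{as drawn}, and in particular that adding the six new arrows does not inadvertently (a) create a multisquare, (b) create new $4$-paths or $5$-paths that would alter $\SS_4$ or make the complex unbounded, or (c) change the directed distances among the $x^i_j$ in a way that affects other $\SS_n$-classes for $n\le 3$ (which would be harmless for the Euler characteristic by Corollary~\ref{cor:dim:n<4}, but still needs to be noted). All of this is a finite, if slightly tedious, verification on the explicit digraph~\eqref{eq main example}; once it is in place, parts~(1) and~(2) drop out of Corollaries~\ref{corollary:basis:omega^n}, \ref{cor:dim:n<4} and~\ref{cor:not-equal} with no further work.
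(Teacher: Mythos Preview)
Your approach is essentially identical to the paper's: verify that $\GG$ has no multisquares, that $\SS_4(\GG)=\SS_4(\GG')$ is a single thick non-bipartite class, that there are no paths of length $\ge 5$, and then invoke Corollaries~\ref{corollary:basis:omega^n}, \ref{cor:dim:n<4}, \ref{cor:not-equal} (the paper phrases part~(2) via Theorem~\ref{thm basis of path cochain complex} directly rather than its corollary, but this is the same argument). One small slip: it is not true that $\RR_n(\GG)=0$ for $n\ge 5$ (that module is spanned by all tuples of vertices without consecutive repetitions); what vanishes is $\AA_n(\GG)$, since there are no $n$-paths, and $\Omega_n\subseteq\AA_n$ then gives the conclusion you want.
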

\begin{proof}
We first check that $\GG$ has no multisquares and has no thin pairs. 
It is easy to check that the pairs $(x,y)$ such that $d(x,y)=2$ are the pairs in the form
\begin{equation} \label{eq pairs of distance 2}
\text{$(x_0,x_2^i), $ $(x_0,x_3^i),$ $(x_1^i,x_3^{i+1}),$ $(x_1^i,x_4),$ $(x_2^i,x_4).$}
\end{equation}
For pairs $(x,y)$ in~\eqref{eq pairs of distance 2}, there are exactly two vertices $v_1,v_2$ such that $(x,v_i,y)$ is a path:
\begin{align*}
(x_0,x_2^i): &\quad  x_1^i,~ x_1^{i-1};\\
(x_0,x_3^i): & \quad x_1^i,~ x_1^{i+1}; \\
(x_1^i,x_3^{i+1}):& \quad x_2^i,~ x_2^{i+1};\\
(x_1^i,x_4):& \quad x_3^i,~ x_3^{i+2};\\
(x_2^i,x_4):& \quad x_3^i,~ x_3^{i+1}. 
\end{align*}
Therefore $\GG$ has no multisquares, and has no thin paths.

The graph $\SS_4(\GG)$ coincides with the graph $\SS_4(G)$ drawn in \eqref{eq:S^4}. It follows that all $4$-paths of $\GG$ form a thick non-bipartite $\SS_4$-class. By Theorems \ref{theorem:dual-basis} and~\ref{theorem:omega:description} we have
\[
\Omega_4( \GG;\QQ)=0,  \hspace{5mm} \Omega_4( \GG;\ZZ/2\ZZ)\cong \ZZ/2\ZZ, \hspace{5mm} \Omega^4(\GG; \ZZ )\cong \ZZ/2\ZZ.
\]

Since there are no paths of length $\geq 5,$ we obtain $\Omega_n(G;\FF)=0$ for $n\geq 5$ and any field $\FF.$ On the other hand by Corollary \ref{cor:dim:n<4}, we have
\[
\text{$\dim_\QQ (\Omega_n(\GG;\QQ))=\dim_{\ZZ/2\ZZ} (\Omega_n(\GG;\ZZ/2\ZZ))$ for $n\leq 3.$}
\]It follows that 
\begin{equation}
 \chi^\QQ(\GG) = \chi^{\ZZ/2\ZZ}(\GG)-1.  \qedhere 
\end{equation}
\end{proof}

\smallskip

\begin{corollary}
The statements in Proposition \ref{prop:general:non-existance} hold for $\GG$. \qed
\end{corollary}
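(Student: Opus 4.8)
The final statement to prove is:

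\begin{proof}[Proof sketch]
The plan is to verify the hypotheses of Proposition~\ref{prop:general:non-existance} for $G = \GG$ with $p = 2$, and then invoke the proposition directly. The bulk of the work is already done: Theorem~\ref{theorem:example}\,(1) establishes that $\chi^\QQ(\GG)$ and $\chi^{\ZZ/2\ZZ}(\GG)$ are defined (this follows because $\Omega_n(\GG;\FF) = 0$ for $n \geq 5$, so both path Euler characteristics are finite sums) and that $\chi^\QQ(\GG) \neq \chi^{\ZZ/2\ZZ}(\GG)$, indeed $\chi^\QQ(\GG) = \chi^{\ZZ/2\ZZ}(\GG) - 1$. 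Thus the single hypothesis of Proposition~\ref{prop:general:non-existance} holds with $p=2$.

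Given this, statements (1)--(5) of the theorem in the introduction are precisely statements (1)--(4) of Proposition~\ref{prop:general:non-existance} specialised to $G = \GG$, $p = 2$ (where the theorem's item (2) splits the proposition's item (1) in the way the introduction phrases it, and the theorem's items (3)--(5) are the proposition's items (2)--(4)), so these require no further argument. Statement (6), that $\Omega^4(\GG;\ZZ)$ has $2$-torsion, is exactly Theorem~\ref{theorem:example}\,(2), which in turn follows from Corollary~\ref{cor:not-equal} together with the observation that the unique $\SS_4$-class of $\GG$ is thick and non-bipartite (the non-bipartiteness coming from the $9$-cycle visible in~\eqref{eq:S^4}, and the thickness from the fact, checked in the proof of Theorem~\ref{theorem:example}, that $\GG$ has no thin pairs).

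There is essentially no obstacle here: the corollary is a pure bookkeeping step. The only point deserving a moment's care is making sure the numbering correspondence between the six items of the introduction's theorem and the four items of Proposition~\ref{prop:general:non-existance} (plus Theorem~\ref{theorem:example}\,(2)) is stated explicitly, so the reader can see that nothing has been dropped; but this is routine.
\end{proof}
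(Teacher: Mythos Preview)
Your proposal is correct in substance and matches the paper's approach: the corollary follows immediately from Theorem~\ref{theorem:example}\,(1), which verifies the hypothesis $\chi^\QQ(\GG)\neq\chi^{\ZZ/2\ZZ}(\GG)$ of Proposition~\ref{prop:general:non-existance} (with both characteristics defined since $\Omega_n(\GG;\FF)=0$ for $n\ge 5$). The paper gives no further argument beyond \qed.

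Your second and third paragraphs, however, are unnecessary and slightly muddled. The corollary as stated asserts only the four conclusions of Proposition~\ref{prop:general:non-existance}; it does not claim the six-item theorem from the introduction, so there is no need to match those numberings. Moreover, your correspondence is off: item~(1) of the introduction's theorem is the inequality $\chi^\QQ(\GG)\neq\chi^{\ZZ/2\ZZ}(\GG)$ itself (i.e.\ Theorem~\ref{theorem:example}\,(1), the \emph{hypothesis} of the proposition), not a conclusion of Proposition~\ref{prop:general:non-existance}; items~(2)--(5) of the introduction correspond one-to-one with items~(1)--(4) of the proposition, with no ``splitting''. Drop that discussion entirely and the proof is clean.
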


\printbibliography

\end{document}